\newcommandx{\attn}[2][1=]{\todo[linecolor=red,backgroundcolor=blue!25,bordercolor=red,#1]{#2}}
\newcommandx{\other}[2][1=]{\todo[linecolor=OliveGreen,backgroundcolor=OliveGreen!25,bordercolor=OliveGreen,#1]{#2}}
\newcommandx{\thiswillnotshow}[2][1=]{\todo[disable,#1]{#2}}
\newtheorem{theorem}{Theorem}
\newtheorem{proposition}{Proposition}
\newcommand{\beq}{\begin{equation}}
\newcommand{\eeq}{\end{equation}}
\newcommand{\beqa}{\begin{eqnarray}}
\newcommand{\eeqa}{\end{eqnarray}}
\newcommand{\beqas}{\begin{eqnarray*}}
\newcommand{\eeqas}{\end{eqnarray*}}
\newcommand{\bi}{\begin{itemize}}
\newcommand{\ei}{\end{itemize}}
\newcommand{\PSDcone}[1]{{\mathcal{S}^{#1}_+}}
\renewcommand{\S}{\mathcal{S}}      
\renewcommand{\Re}{\mathbb{R}}
\def\qed{\ifhmode\unskip\nobreak\fi\ifmmode\ifinner\else\hskip5pt\fi\fi
  \hbox{\hskip5pt\vrule width5pt height5pt depth1.5pt\hskip1pt}}
\def\QED{\ifhmode\unskip\nobreak\fi\ifmmode\ifinner\else\hskip5pt\fi\fi
  \hbox{\hskip5pt\vrule width5pt height5pt depth1.5pt\hskip1pt}}
\newcommand{\stdCone}{ \ensuremath{\mathcal{K}}}
\def\Id{{I_d}}
\def\Ip{{I_p}}
\renewcommand{\S}{\mathcal{S}} 
\renewcommand{\Re}{\mathbb{R}} 
\newcommand{\iPSDcone}[1]{{\mathcal{S}^{#1}_{++}}}
\def\Id{I_d}
\def\Ip{I_p}
\def\av{{v_a}}
\newcommand{\norm}[1]{\lVert{#1}\rVert}
\begin{document}

\title{
A Limiting Analysis on Regularization of Singular SDP and its Implication to Infeasible Interior-point Algorithms
\thanks{The first author is supported in part by MEXT Grant-in-Aid for Scientific Research (B)18H03206,
the second author is supported in part by the same grant and MEXT Grant-in-Aid for Young Scientists 19K20217,
the third author is supported in part by MEXT Grant-in-Aid for Scientific Research (C)17K00031 and
the same grant for Scientific Research (B)20H04145, and 
the fourth author is supported in part by MEXT Grant-in-Aid for Young Scientists 20K19748 and the same grant for Scientific Research (B)20H04145.
}  \\ \ \ \ \\
}


\author{Takashi Tsuchiya\footnote{National Graduate Institute for Policy Studies, 7-22-1 Roppongi, Minato-ku, Tokyo 106-8677 Japan, 
              e-mail: {tsuchiya@grips.ac.jp}}           %
\and
         Bruno F.~Louren\c{c}o\footnote{The Institute of Statistical Mathematics, Midori-cho 10-3, Tachikawa, 190-8562 Tokyo Japan, e-mail: {bruno@ism.ac.jp}} \and Masakazu Muramatsu\footnote{The University of Electro-Communications, 
1-5-1 Chofugaoka, Chofu, Tokyo 182-8585 Japan, e-mail: MasakazuMuramatsu@uec.ac.jp } \and Takayuki Okuno\footnote{  Center for Advanced Intelligence Project, RIKEN, 1-4-1 Nihonbashi, Chuo-ku, Tokyo 103-0027 Japan.
email: {takayuki.okuno.ks@riken.jp}       }
}

\date{Revised: March, 2022 \\ (Original: December 2019, Revised: May 2021)}

\maketitle
\begin{abstract}
We consider primal-dual pairs of semidefinite programs and assume that they are singular, i.e., both primal and dual are either weakly feasible or
weakly infeasible.  Under such circumstances, 
strong duality may break down and the primal and dual might have a nonzero duality gap.
Nevertheless, there 
are arbitrary small perturbations to the problem data which would make them strongly feasible thus 
zeroing the duality gap.
In this paper, we conduct an asymptotic  analysis of the optimal value 
as the perturbation for regularization is driven to zero.
Specifically, we fix two positive definite matrices, $I_p$ and $I_d$, say, (typically the identity matrices), and regularize
the primal and dual problems by shifting their associated affine space by $\eta I_p$ and $\varepsilon I_d$, respectively, to recover interior feasibility of both problems, where $\varepsilon$ and $\eta$ are positive numbers.
Then we analyze the behavior of the optimal value of the regularized problem when the perturbation is reduced to zero keeping the ratio between $\eta$ and
$\varepsilon$ constant. 
A key feature of our analysis is that no further assumptions such as compactness or constraint qualifications are ever made.
It will be shown that the optimal value of the perturbed problem converges to a value between the primal and dual optimal values of the original problems.  
Furthermore, the limiting optimal value changes ``monotonically'' from the primal optimal value to the dual optimal value as a function of $\theta$, 
if we parametrize $(\varepsilon, \eta)$ as $(\varepsilon, \eta)=t(\cos\theta, \sin\theta)$ and let $t\rightarrow 0$.
Finally, the analysis leads us to the relatively surprising consequence that some representative 
infeasible interior-point algorithms for SDP generate sequences converging to a number 
between the primal and dual optimal values, even in the presence of a nonzero duality gap. 
Though this result is more of theoretical interest at this point, it might be of some value in the development of infeasible interior-point algorithms that can handle  singular problems.

\medskip\noindent
{\bf keywords}: Semidefinite programs, singular problems, nonzero duality gaps, perturbation, regularization, infeasible interior-point algorithms
\end{abstract}

\section{Introduction}

{
Strong feasibility of primal and dual problems is a standard regularity condition 
in convex optimization, e.g.,  \cite{Luo97dualityresults}, \cite[Chapter~3]{Renegar_book_2001}.}  
Once this condition is satisfied,    
powerful algorithms such as interior-point algorithms and the ellipsoid algorithm 
can be applied to solve them efficiently, at least in theory.  
On the other hand,  if a problem at hand does not satisfy this condition, it can be much harder to solve.
{For instance, the problem may have a positive duality gap.}
Due to the advance of techniques of optimization modelling, there are many problems which do not satisfy primal-dual 
strong feasibility by nature.  

{A first attempt to apply interior-point algorithms to such problems would be to
perturb the problem to recover strong feasibility at both sides, i.e., ``regularization.''  But it is not clear how this perturbation affects
the optimal value.  
In this paper, we focus on semidefinite programs (SDP) and conduct an asymptotic analysis 
of the optimal value function when the problem is perturbed slightly to recover primal-dual strong feasibility.   
The analysis is general enough to be applicable to any { ill-behaved} problems without assuming 
constraint qualifications, and has interesting implications to the convergence theory of interior-point algorithms.}

{
It is known that every SDP falls into one of the four statuses: strongly feasible, weakly feasible, weakly infeasible and
strongly infeasible, e.g., \cite{Luo96dualityand}.} { Difficult} situations like positive duality gap may occur when
the problem is either weakly feasible or weakly infeasible.  
{We may call such problems ``singular.''}

{ A standard method to deal with singular problems 
in semidefinite programming and general conic convex programming is facial reduction
\cite{borwein_facial_1981,Borwein1981495,csw13,DW17,sturm_error_2000,WM13,LP17,sremac2017complete}.}
This approach recovers strong feasibility by finding the minimal face containing the feasible region.
{ While many of the earlier papers on facial reduction focused on weakly feasible problems, it is relatively recent that weak 
infeasibility is analyzed in this context \cite{lourenco_muramatsu_tsuchiya, LP17,GoodPataki2017}.   
Along this line of developments, 
the paper \cite{LMT21} showed that any SDP can be solved completely just by calling an interior-point oracle polynomially many times by using facial reduction, where
the interior-point oracle is an idealized interior-point algorithm which returns a primal-dual optimal solutions given a primal-dual 
strongly feasible SDP.  
In the context of SDPs with positive duality gaps, Ramana 
developed an extended Lagrangian dual SDP 
for which strong duality always holds \cite{Ramana95anexact}.  
Later it was shown in \cite{ramana_strong_1997} this dual problem is strongly related to facial reduction, see also \cite{pataki_strong_2013}.}

Implementation of a facial reduction algorithm is subtle and not easy, being vulnerable to rounding errors.
Nevertheless, it is worth mentioning that there are several recent works focused on practical issues regarding facial reduction or on heuristics based on facial reduction \cite{PP17,PFA17,Fr16,YPT17}.

{ So far, we have discussed approaches based on (or related to) facial reduction in order to deal with singular SDPs. 
 Unrelated to that, the paper \cite{Liu2019} considered an application of the Douglas-Rachford algorithm to the analysis of pathological behavior in SDPs. Interestingly, they show it is sometimes possible to identify the presence of positive duality gaps by observing whether certain sequences converge to $0$ or to $\infty$, see \cite[Figure~1, Sections~2.8 and 2.9]{Liu2019}.
  
As mentioned previously, in this paper we will consider yet another approach for analyzing singular SDPs: \emph{regularization}.}
The idea is to perturb the problem slightly to recover strong feasibility on both primal and dual sides.  
Once strong feasibility is recovered, we may, say, apply interior-point algorithms to the regularized problems.
However, the resulting approximate optimal solution is not guaranteed to be close to the optimal solution to 
the original problem, though intuitively we might expect or hope so.  In particular, if we 
consider a SDP problem with a finite and nonzero duality gap, 
it is not clear what happens with the optimal value and the optimal solutions of the regularized problem as functions of the perturbation
when the perturbation is reduced to zero.   

Analyzing this problem is one of the main topics of the current paper.
We consider  primal and dual pairs of semidefinite programs and assume they are singular i.e., either weakly feasible or
weakly infeasible (see Section~\ref{sec:setup} for definitions).  Under these circumstances, there are arbitrarily small perturbations which make the perturbed pair primal-dual strongly feasible. Then, we fix two positive definite matrices, and shift the associated affine spaces of the primal and dual slightly in the direction of these matrices 
so that the perturbed problems have interior feasible solutions. Under this setting, we analyze the behavior of the optimal value of the perturbed problem when the perturbation is reduced to zero while keeping the proportion.  

First, we demonstrate that, if perturbation is added only to the primal problem to recover strong feasibility, 
then the optimal value of the 
perturbed  problem converges to the dual optimal value as the perturbation is reduced to zero, 
even in the presence of nonzero duality gap.  
An analogous proposition holds for the dual problem.  We derive them as a significantly 
simplified version of  the classical asymptotic strong duality theorem (see, for instance, \cite{Duffin1957,bams/1183530289,BS00,Renegar_book_2001,Luo96dualityand,Luo97dualityresults} and Chapter 2 of \cite{sturm_high}).

Then we analyze the case where perturbation is added to \emph{both}  primal and dual sides of the problem.  We will demonstrate that
in that case the limiting 
optimal value of the perturbed problems converges to
a value between the primal and dual optimal values of the original problem even in the presence of nonzero duality gap.  
The limiting optimal value is a function of the relative weight 
of primal and dual perturbations, and reduces monotonically from the primal optimal value to the dual optimal value as the relative weight shifts from the dual side to the primal side.

The result provides an interesting implication to the behavior of infeasible interior-point algorithms applied to general SDPs
\cite{Helmberg:1996:IPM,Kojima:1997:IPM,Monteiro:1997:PDP,nesterov_todd_1,Todd:1998:NTD,Zhang:1998:ESP,ipm:Potra16}. 
In particular, we pick up two well-known polynomial-time infeasible interior-point algorithms by Zhang \cite{Zhang:1998:ESP} and 
Potra and Sheng \cite{ipm:Potra16}, and prove the following (see Theorems~\ref{5.1} and \ref{5.2}): 

\noindent
\begin{enumerate}
\item
If neither the primal nor the dual are strongly infeasible then:
\begin{enumerate}
	\item the algorithms always generate sequences $(X^k,S^k,y^k)$ that are asymptotically primal-dual feasible and such that the ``duality gap'' $X^k\bullet S^k$
 converges to zero.
	\item the sequence of modified (primal and dual) objective values converges to a number in $[\theta_D, \theta_P]$, where $\theta_P$ and 
	$\theta_D$ are the primal optimal value and the dual optimal value, respectively. 
\end{enumerate}
\item Otherwise (i.e., if either the primal or the dual is strongly infeasible), the algorithms fail to generate a sequence such 
that the duality gap $X^k\bullet S^k$
converges to zero. (Needless to say, there is no way to 
generate an asymptotically primal-dual feasible sequence in this case.) 
\end{enumerate}
One implication of the result above is that, at least in theory, these interior-point algorithms generate sequences converging
to the optimal value as long as strong feasibility is satisfied at one side of the problem.
Furthermore, even in the presence of a finite duality gap,
they still generate sequences converging to values between the primal and dual optimal values.  It is also worth mentioning 
that our analysis shows that, by setting appropriate initial iterates, it is possible 
to control how close the limit value will be to the primal or the dual optimal values.  

Though this result is more of theoretical interest, this might be of some value if one wants to solve mixed-integer SDP (MISDP) through branch-and-bound and linear SDP relaxations.
As discussed in \cite{GPU18}, it is quite possible that the relaxations eventually fail to satisfy strong feasibility at  least one of the sides of the problem.

Nevertheless, the solutions obtained by 
the infeasible interior-point methods described above can still be used as bounds 
to the optimal values of the relaxed linear SDPs regardless of regularity assumptions or constraint qualifications (at least in theory).

This paper is organized as follows.  In Section~\ref{sec:main}, we describe our main results.  Section~\ref{sec:prel} is a preliminary section 
where we review asymptotic strong duality, infeasible interior-point algorithms, and semialgebraic geometry.
In Section~\ref{sec:proof}, we develop a main analysis when both primal and dual problems are perturbed.  In Section~\ref{sec:app}, we apply the developed result 
to an analysis of the infeasible primal-dual algorithms.  In Section~\ref{sec:ex}, illustrative instances will be presented.

\section{Main Results}\label{sec:main}


In this section, we introduce our main results after providing the  setup and some preliminaries. 
We also review existing related results.

\subsection{Setup and Terminology}\label{sec:setup}

First we introduce the notation.
The space of $n\times n$ real symmetric matrices 
will be denoted by $\S^n$. We denote the cone of $n\times n$ real symmetric positive semidefinite matrices and
the cone of $n\times n$ real symmetric positive definite matrices by $\PSDcone{n}$ and $\iPSDcone{n}$.
For $U,V \in \S^n$, we define the inner product $U\bullet V$ as $\sum U_{ij} V_{ij}$, and we use 
 $U \succeq 0$ and $U\succ 0$ to denote
that $U \in \PSDcone{n}$ and $U \in \iPSDcone{n}$, respectively. 
The $n\times n$ identity matrix is denoted by $I$.
We denote the Frobenius norm and the operator norm by $\norm{X}_{F}$ and $\norm{X}$. For $v \in \Re^k$, we denote by $\norm{v}$ its Euclidean norm.

In this paper, we deal with the following standard form primal-dual semidefinite programs
\begin{align*}
{\bf P:}&\ \ \ \min_{X} \ C \bullet X \ \ \hbox{s.t.}\ A_i \bullet X = b_i,\ i=1, \ldots, m, X\succeq 0\\
{\bf D:}&\ \ \ \max_{y,S} \ b^T y \ \ \ \ \hbox{s.t.} \ C - \sum_{i=1}^m A_i y_i  = S,\ S\succeq 0,
\end{align*} 
where $C$, $A_i, i = 1, \ldots, m$, $X$, $S$ are real symmetric $n\times n$ matrices and $y \in \Re^m$.    
For ease of notation, we define the mapping $A$ from $\S^n$ to $\Re^m$:
\begin{equation}\label{eq:AY}
A(Y)  \equiv (A_1\bullet Y, \ldots, A_m\bullet Y),
\end{equation}
and introduce 
\[\mathcal{V} \equiv \{X \in \S^n \mid   A_i \bullet X = b_i,\ i=1, \ldots, m \} =\{X\in \S^n \mid A(X)=b\}.\]
We denote by  
$v({\bf P})$ and $v({\bf D})$ the optimal values of  {\bf P} and {\bf D}, respectively.
We use analogous notation throughout the paper to denote the optimal value of an optimization problem.
For a maximization problem, the optimal value $+\infty$ means that the optimal value is unbounded above
and the optimal value $-\infty$ means that the problem is infeasible.
For a minimization problem, the optimal value $-\infty$ means that the optimal value is unbounded below
and the optimal value $+\infty$ means that the the problem is infeasible.

It is well-known that $v({\bf P}) = v({\bf D})$ holds under suitable regularity conditions, although, in general, we might have   $v({\bf P}) \neq v({\bf D})$, i.e.,  the problem may have a nonzero duality gap.  We also note that $v({\bf P})$ and $v({\bf D})$ might not be necessarily attainable.

In general,  ${\bf P}$ is known to be in one of the following four different mutually exclusive status (see \cite{Luo97dualityresults}). 

\begin{enumerate}
	\item Strongly feasible: there exists a positive definite matrix satisfying the constraints of ${\bf P}$, i.e., $\mathcal{V} \cap \iPSDcone{n} \neq \emptyset$. This is the same as Slater's condition.
	\item Weakly feasible: ${\bf P}$ is feasible but not strongly feasible, i.e., $\mathcal{V} \cap \iPSDcone{n} =\emptyset$ but $\mathcal{V} \cap \PSDcone{n} \neq \emptyset$.
	\item Weakly infeasible: ${\bf P}$ is infeasible but the distance between $\PSDcone{n}$ and the affine space $\mathcal{V}$ is zero, i.e., 
	 $\mathcal{V} \cap \PSDcone{n} = \emptyset$ but the zero matrix  belongs to the closure of $\PSDcone{n}-\mathcal{V}$.
	\item Strongly infeasible: ${\bf P}$ is infeasible but not weakly infeasible. Note that this includes the case where $\mathcal{V} = \emptyset$.
\end{enumerate}
The status of ${\bf D}$ is defined 
analogously by replacing  
$\mathcal{V}$ by  the affine set \[\{S \in \S^n \mid \exists y \in \Re^m, C - \sum_{i=1}^m A_i y_i  = S  \}.\] 
We say that a problem is {\it asymptotically feasible} if it is either feasible or weakly infeasible. As a reminder, we say that a problem is \emph{singular} if it is either weakly feasible or weakly infeasible.


\subsection{Main Results}\label{subsec2.2}

Now we introduce the main results of this paper.  We say that a problem is {\em asymptotically primal-dual feasible}
(or {\em asymptotically pd-feasible}, in short)
if both {\bf P} and {\bf D} are asymptotically feasible.  Evidently, the problem is asymptotically pd-feasible 
if and only if  both {\bf P} and {\bf D} are feasible or weakly infeasible.  The analysis in this paper is conducted mainly
under this condition.  

Note that asymptotic pd-feasibility is a rather weak condition.
Many difficult situations such as finite nonzero duality gaps and weak infeasibility of both {\bf P} and {\bf D}
are covered under this condition. 
Furthermore, since strong infeasibility can be detected by solving auxiliary SDPs that are both primal and dual strongly 
feasible (see \cite{LMT15}), checking whether a given problem is asymptotically pd-feasible or not   can also  be checked by solving SDPs that 
are primal and dual strongly feasible.

We consider the following primal-dual pair {\bf P}($\varepsilon,\eta$) and ${\bf D}(\varepsilon,\eta)$
obtained by perturbing {\bf P} and {\bf D} 
with two positive definite matrices $\Ip$ and $\Id$ and two nonnegative parameters $\varepsilon$ and $\eta$:

\begin{equation}\label{Pptbd}
{\bf P}(\varepsilon,\eta):\ \ \ \min \ (C + \varepsilon \Id)\bullet X \ \ \hbox{s.t.}\ A_i \bullet X = b_i+ \eta A_i\bullet \Ip,\ i=1, \ldots, m,\ X\succeq 0,
\end{equation}
and 
\begin{equation}\label{Dptbd}
{\bf D}(\varepsilon,\eta):\ \ \ \max \sum_{i=1}^m (b_i + \eta A_i\bullet \Ip)y_i \ \ \hbox{s.t.} \ C - \sum_{i=1}^m A_i y_i + \varepsilon \Id = S,\ \ \ S\succeq 0.
\end{equation}
Using \eqref{eq:AY}, we have
\begin{equation}\label{Pptbdalt}
{\bf P}(\varepsilon,\eta):\ \ \ \min \ (C + \varepsilon \Id)\bullet X \ \ \hbox{s.t.}\ A(X) = b+ \eta A(\Ip),\ X\succeq 0.
\end{equation}
While $\Ip$ and $\Id$ represent the direction of perturbation, $\varepsilon$ and $\eta$ represent the amount of perturbation.
In particular, we could take, for example, $\Ip = \Id = I$, where $I$ is the $n\times n$ identity matrix.
{ We note that the perturbed pair \eqref{Pptbd} and \eqref{Dptbd} was used in the study of infeasible interior-point algorithms  \cite{ipm:Potra16} and facial reduction\cite{sremac2017complete}.}

If the problem is asymptotically pd-feasible, ${\bf D}(\varepsilon,\eta)$ is strongly feasible for any $\varepsilon > 0$ and
${\bf P}(\varepsilon,\eta)$ is strongly feasible for any $\eta>0$.  To see the strong feasibility of ${\bf P}(\varepsilon,\eta)$, 
we observe that there always exists $\widetilde X\succeq -\eta \Ip/2$ satisfying 
$A_i\bullet \widetilde X = b_i, i=1, \ldots, m$, since {\bf P} is weakly infeasible or feasible.  Then, we see that the matrix $X=\widetilde X+\eta \Ip$ is positive definite and a feasible solution to ${\bf P}(\varepsilon,\eta)$.  
We emphasize that the primal-dual pair ${\bf P}(\varepsilon,\eta)$ and  
${\bf D}(\varepsilon,\eta)$ is a natural and possibly one of the simplest regularizations of ${\bf P}$ and ${\bf D}$ which ensures primal-dual strong feasibility under perturbation.

We define $v(\varepsilon,\eta)$ to 
be the common optimal value of ${\bf P}(\varepsilon,\eta)$ and ${\bf D}(\varepsilon,\eta)$ if they coincide.
If the optimal values differ, $v(\varepsilon,\eta)$ is not defined.
Suppose that {\bf P} and {\bf D} are asymptotically pd-feasible.
In this case, from the the duality theory of convex programs, the function $v(\varepsilon,\eta)$ has the following properties:
\begin{enumerate}
\item $v(\varepsilon,\eta)$ is finite if $\varepsilon >0$ and $\eta>0$.
\item $v(\varepsilon,0)$ is well-defined as long as $\varepsilon > 0$ and it takes the value $+\infty$ if {\bf P} is infeasible.
\item $v(0,\eta)$ is well-defined as long as $\eta > 0$ and it takes the value $-\infty$ if {\bf D} is infeasible.
\item $v(\varepsilon,\eta)$ may not be defined at $(0,0)$.  This is because ${\bf P}={\bf P}(0,0)$ and ${\bf D}={\bf D}(0,0)$ may have 
different optimal values, i.e.,  {\bf P} and {\bf D} may have a nonzero duality gap.
\end{enumerate}
Therefore, although the regularized pair ${\bf P}(\varepsilon,\eta)$ and ${\bf D}(\varepsilon,\eta)$ satisfies primal-dual strong feasibility
if $\varepsilon > 0$ and $\eta > 0$,
it is not clear whether this is actually useful in solving SDP under { notorious} situations such as the presence of nonzero duality gaps.  
This is precisely one of the main topics of this paper: an analysis on the behavior of the regularized problems without
imposing any restrictive assumption. 

In this context, it is worth mentioning that the following asymptotic strong duality results
\[
\hbox{(i)}\ \lim_{\varepsilon \downarrow 0}v(\varepsilon, 0) =
\lim_{\varepsilon \downarrow 0}v({\bf D}(\varepsilon, 0)) =
 v({\bf P})\ \hbox{under\ dual\ asymptotic\ feasibility}
\]
and
\[
\hbox{(ii)}\ \lim_{\eta \downarrow 0}v(0,\eta)=\lim_{\eta \downarrow 0}v({\bf P}(0,\eta)) = v({\bf D})
\hbox{\ under\ primal\ asymptotic\ feasibility}
\]
are obtained as corollaries of the classical asymptotic strong duality theorem established in the 1950's and 1960's  
\cite{Duffin1957,bams/1183530289}.  { This theory} received renewed attention with the emergence of conic linear programming; see, for instance,
\cite{BS00,Renegar_book_2001,Luo96dualityand,Luo97dualityresults} and Chapter 2 of \cite{sturm_high}.  
We will prove (i) and (ii) in the next section, see {Theorem~\ref{3.2}}. In comparison with the classical asymptotic strong duality theorem, Theorem~\ref{3.2} considers a smaller perturbation space.

%

Now we are ready to describe the main results.  They are developed to interpolate between (i) and (ii).
The first result is the following theorem. 
\begin{theorem}\label{2.1}
Let $\alpha \geq 0$, $\beta \geq 0$ and $(\alpha,\beta)\not=(0,0)$.  If the problem is asymptotically pd-feasible, then   
$\lim_{t\downarrow 0} v(t\alpha, t\beta)$ exists. 
\end{theorem}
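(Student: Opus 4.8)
The plan is to split along the two boundary rays first and then concentrate all the real work on the two‑sided case. If $\beta=0$ (so $\alpha>0$), the ray is $(t\alpha,0)$ and $v(t\alpha,0)=v({\bf P}(t\alpha,0))$ is well defined for $t>0$; since asymptotic pd‑feasibility includes dual asymptotic feasibility, statement (i) of the asymptotic strong duality theorem (Theorem~\ref{3.2}) yields $\lim_{t\downarrow 0}v(t\alpha,0)=v({\bf P})$. Symmetrically, if $\alpha=0$ then statement (ii) gives $\lim_{t\downarrow 0}v(0,t\beta)=v({\bf D})$. Thus I may assume $\alpha>0$ and $\beta>0$, in which case, for every $t>0$, both $\varepsilon=t\alpha>0$ and $\eta=t\beta>0$, so ${\bf P}(t\alpha,t\beta)$ and ${\bf D}(t\alpha,t\beta)$ are both strongly feasible (as recalled in the paragraph following \eqref{Pptbdalt}). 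Strong duality then holds, the common value $v(t\alpha,t\beta)$ exists and is finite, and $f(t):=v(t\alpha,t\beta)$ is a single‑valued, real‑valued function on $(0,\infty)$.

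The core of the argument is to show that $f$ is a \emph{semialgebraic} function of the scalar $t$ and then invoke the structural theory reviewed in Section~\ref{sec:prel}. The graph of $f$ is $\{(t,r): t>0,\ r=\inf\{(C+t\alpha\Id)\bullet X: A(X)=b+t\beta A(\Ip),\ X\succeq 0\}\}$. Here the objective $(C+t\alpha\Id)\bullet X$ is a polynomial in $(t,X)$, the constraints $A(X)=b+t\beta A(\Ip)$ are polynomial equalities in $(t,X)$, and $X\succeq 0$ is a semialgebraic condition on the entries of $X$ (for instance, nonnegativity of the coefficients of its characteristic polynomial). Hence the parametrized feasible family is semialgebraic, and the statement ``$r$ equals the infimum of the objective over the fiber above $t$'' is a first‑order formula over the reals, so by the Tarski–Seidenberg theorem the value function $f$ is semialgebraic.

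Once $f$ is known to be semialgebraic on $(0,\infty)$, the monotonicity theorem for one‑variable semialgebraic functions (equivalently, the existence of a Puiseux expansion at $0$) provides a $\delta>0$ such that $f$ is continuous and monotone on $(0,\delta)$. A monotone function on $(0,\delta)$ necessarily has a one‑sided limit as $t\downarrow 0$ in $\mathbb{R}\cup\{\pm\infty\}$, and this is exactly $\lim_{t\downarrow 0}f(t)$; this establishes existence. Finiteness of the limit can moreover be obtained whenever both ${\bf P}$ and ${\bf D}$ are feasible: choosing $\bar X\succeq 0$ with $A(\bar X)=b$ and $\bar y$ with $C-\sum_i A_i\bar y_i\succeq 0$, the matrix $\bar X+t\beta\Ip$ is feasible for ${\bf P}(t\alpha,t\beta)$ and $\bar y$ is feasible for ${\bf D}(t\alpha,t\beta)$, so $(b+t\beta A(\Ip))^T\bar y\le f(t)\le (C+t\alpha\Id)\bullet(\bar X+t\beta\Ip)$, and both bounds remain finite as $t\downarrow 0$.

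I expect the main obstacle to lie not in extracting the limit — semialgebraicity rules out oscillation for free — but in the clean setup that makes the monotonicity theorem applicable. Two points need care: first, that $v(t\alpha,t\beta)$ genuinely equals the common primal–dual optimal value, which is where strong feasibility of both perturbed problems (hence $\alpha,\beta>0$ together with asymptotic pd‑feasibility) is essential so that $f$ is real‑valued on all of $(0,\infty)$; and second, the faithful encoding of $X\succeq 0$ and of the infimum inside a first‑order formula so that Tarski–Seidenberg yields semialgebraicity. Notably, no compactness or constraint qualification beyond the strong feasibility of the perturbed problems is invoked, in keeping with the paper's stated aim of assumption‑free analysis.
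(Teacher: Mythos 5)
Your proof is correct, but it takes a genuinely different route from the paper's. The paper never shows directly that the value function is semialgebraic; instead, for a fixed $\nu>0$ it introduces the perturbed central-path point $w_\nu(t)=(X_\nu(t),S_\nu(t),y_\nu(t))$ of the pair ${\bf P}(t\alpha,t\beta)$, ${\bf D}(t\alpha,t\beta)$, applies the Tarski--Seidenberg theorem (only in its projection form) to the system \eqref{alg} coupling the path with its tangent to show that the critical set of $t\mapsto (C+t\alpha\Id)\bullet X_\nu(t)$ is a semialgebraic subset of $\Re$, combines this with analyticity of the path to conclude that this function is eventually monotone, and finally transfers the existence of the limit to $\vopt(t)$ via the uniform bound $0\le (C+t\alpha\Id)\bullet X_\nu(t)-\vopt(t)\le n\nu$ and a triangle-inequality argument with $\tilde\nu=z/(6n)$. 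You instead encode the graph of $t\mapsto v(t\alpha,t\beta)$ by a first-order formula and invoke quantifier elimination plus the monotonicity theorem for one-variable semialgebraic functions. Your argument is shorter, needs no analyticity, and dispenses with the linear-independence assumption on the $A_i$ that the paper adopts in its proof; the price is reliance on the semialgebraic monotonicity theorem, a heavier off-the-shelf result than the bare projection statement the paper quotes (though both rest on the same cell-decomposition machinery), and on the closure of semialgebraic sets under complementation needed to eliminate the universal quantifiers in ``$r=\inf\{\cdots\}$''. The paper's detour through the central path is also not wasted effort: the path $\cal C$ and the bound \eqref{optvbd} are exactly the tools reused in Section~\ref{sec:app} to analyze the infeasible interior-point algorithms, whereas your argument would leave that machinery still to be built. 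One cosmetic point: the semialgebraic description of $X\succeq 0$ should be via the signs of the coefficients of $\det(\lambda I+X)$ (equivalently, nonnegativity of the sums of principal minors), not literally ``nonnegativity of the coefficients of the characteristic polynomial''; this does not affect the argument.
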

Here we remark that Theorem~\ref{2.1} includes the case where the limit is $\pm \infty$.
Theorem~\ref{2.1} implies that the limit of the optimal value of the perturbed system exists but 
it is a function of the direction used to approach $(0,0)$.  For $\theta \in [0,\pi/2]$, let us consider the function
\[
\av(\theta) \equiv \lim_{t\downarrow 0}v(t\cos \theta,t\sin\theta),
\]
which is the limiting optimal value of $v(\cdot)$ when it approaches zero along the direction making an  angle of $\theta$ with the $\varepsilon$ axis.  With that, $\av(0)$ and $\av(\pi/2)$ are the special cases corresponding to dual-only perturbation and primal-only perturbation, respectively.  So we abuse notation slightly and define 
\begin{equation}\label{eq:vadvap}
\av({\bf D}) \equiv \av(0) \quad \text{ and }\quad  \av({\bf P}) \equiv \av(\pi/2).
\end{equation}
Below is our second main result.

\begin{theorem} \label{2.2} If the problem is asymptotically pd-feasible, the following statements hold.

\begin{enumerate}
\item $\av(0) = \av({\bf D})=v({\bf P})$ and $\av(\pi/2) =\av({\bf P})= v({\bf D})$.
\item $\av(\theta)$ is monotone decreasing in $[0, \pi/2]$,
and is continuous in $(0,\pi/2)$.
\end{enumerate}
\end{theorem}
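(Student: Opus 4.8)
The plan is to dispatch the two items separately: item~1 falls straight out of the already-stated asymptotic strong duality results (i) and (ii) (i.e.\ Theorem~\ref{3.2}), and item~2 I would prove by a direct ray-comparison argument. For item~1, I would just unwind the definitions. By definition $\av(0)=\lim_{t\downarrow0}v(t,0)$ and $\av(\pi/2)=\lim_{t\downarrow0}v(0,t)$. Since asymptotic pd-feasibility entails both primal and dual asymptotic feasibility, results (i) and (ii) apply and give $\lim_{t\downarrow0}v(t,0)=v({\bf P})$ and $\lim_{t\downarrow0}v(0,t)=v({\bf D})$; together with the abbreviations $\av({\bf D})\equiv\av(0)$ and $\av({\bf P})\equiv\av(\pi/2)$ from \eqref{eq:vadvap}, this is exactly the first assertion, and the same statement covers the $\pm\infty$ cases arising when {\bf P} or {\bf D} is infeasible.

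The substance is item~2. I would first record two monotonicity facts about $v(\varepsilon,\eta)$ on the quadrant $\varepsilon,\eta\ge0$. First, for fixed $\eta$ the feasible set of ${\bf P}(\varepsilon,\eta)$ is independent of $\varepsilon$, and for each feasible $X\succeq0$ the objective $(C+\varepsilon\Id)\bullet X=C\bullet X+\varepsilon(\Id\bullet X)$ is nondecreasing in $\varepsilon$ because $\Id\bullet X\ge0$; taking the infimum over $X$ shows that $\varepsilon\mapsto v(\varepsilon,\eta)$ is nondecreasing. Second, substituting $X=\widetilde X+\eta\Ip$ turns the constraint $A(X)=b+\eta A(\Ip)$ into $A(\widetilde X)=b$ and $X\succeq0$ into $\widetilde X\succeq-\eta\Ip$, yielding the decomposition
\[
v(\varepsilon,\eta)=\eta\,(C+\varepsilon\Id)\bullet\Ip+h_\varepsilon(\eta),\qquad h_\varepsilon(\eta):=\min_{\widetilde X\in\mathcal{V},\ \widetilde X\succeq-\eta\Ip}(C+\varepsilon\Id)\bullet\widetilde X .
\]
Here $h_\varepsilon(\eta)$ is nonincreasing in $\eta$, since increasing $\eta$ enlarges the set $\{\widetilde X\in\mathcal{V}:\widetilde X\succeq-\eta\Ip\}$, and it is finite along the relevant points by asymptotic feasibility (exactly as in the paper's argument for strong feasibility of ${\bf P}(\varepsilon,\eta)$).

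With these in hand, fix $0\le\theta_1<\theta_2\le\pi/2$ and write $c_i=\cos\theta_i$, $s_i=\sin\theta_i$, so $c_2\le c_1$ and $s_2\ge s_1$. For small $t>0$ I would chain the two facts through the intermediate point $(tc_1,ts_2)$:
\[
v(tc_2,ts_2)\ \le\ v(tc_1,ts_2)\ \le\ v(tc_1,ts_1)+t(s_2-s_1)\,(C+tc_1\Id)\bullet\Ip ,
\]
where the first inequality is $\varepsilon$-monotonicity (same $\eta=ts_2$, smaller $\varepsilon$) and the second combines the decomposition with $h_{tc_1}(ts_2)\le h_{tc_1}(ts_1)$. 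Letting $t\downarrow0$, the correction term vanishes and Theorem~\ref{2.1} guarantees that both outer limits exist, so $\av(\theta_2)\le\av(\theta_1)$, which is monotone decrease; the argument also survives the boundary rays $\theta_1=0$, $\theta_2=\pi/2$ and the infinite-valued cases.

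The main obstacle to anticipate is that $v(\varepsilon,\eta)$ is \emph{not} monotone in $\eta$ for fixed $\varepsilon$ (the dual objective coefficient $A(\Ip)^{\!\top}y$ carries no fixed sign), so a naive coordinatewise comparison fails outright. The decomposition is precisely what circumvents this: it quarantines the entire non-monotone $\eta$-dependence into the single linear term $\eta(C+\varepsilon\Id)\bullet\Ip$, which is $O(t)$ along a ray and hence disappears in the limit, leaving the genuinely monotone remainder $h_\varepsilon$. Identifying this split and recognizing that it is the passage $t\downarrow0$ that rescues monotonicity is the crux; the finiteness of $h_\varepsilon$ and the boundary/infinite bookkeeping are then routine.
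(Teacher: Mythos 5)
Your proof is correct and follows essentially the same route as the paper: item~1 is read off from Theorem~\ref{3.2}, and item~2 rests on exactly the two monotonicity facts of Proposition~\ref{31} (monotonicity in $\varepsilon$, and monotonicity in $\eta$ of the shifted value $v(\varepsilon,\eta)-\eta C\bullet\Ip-\eta\varepsilon\Id\bullet\Ip$, which is your $h_\varepsilon(\eta)$), with the $O(t)$ correction term vanishing along the ray. The only difference is presentational: the paper first normalizes to $\tilde v(\beta)=\lim_{t\downarrow0}v(t,t\beta)$ and argues by contradiction (Theorem~\ref{4.2}), whereas you chain the two inequalities directly through the intermediate point $(tc_1,ts_2)$, which is a slightly cleaner direct argument of the same substance.
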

Theorem \ref{2.2} is proved by using Theorem \ref{4.2} which establishes monotonicity and convexity of 
$\lim_{t\rightarrow 0} v(t, t\beta)$.

Now we turn our attention to the connection of these main results to the convergence analysis of the primal-dual infeasible 
interior-point algorithm.  Indeed, 
the pair \eqref{Pptbd} and \eqref{Dptbd} appears often in the analysis of infeasible interior-point algorithms.
In particular, primal-dual infeasible interior-point algorithms typically generate a sequence of feasible solutions
to ${\bf P}(t^k,t^k)$ and ${\bf D}(t^k,t^k)$, where $\Ip$ and $\Id$ are determined
by the initial value of the algorithm and $t^k$ is a positive sequence converging to 0. 
By Theorem~\ref{2.2} , the common optimal value $v(t^k, t^k)$ of  ${\bf P}(t^k,t^k)$ and ${\bf D}(t^k,t^k)$ converges to
$\av(\pi/4)$ which is between $v({\bf P})$ and $v({\bf D})$.
Therefore, if we can show that an infeasible interior-point algorithm generates a sequence 
which approaches $v(t^k,t^k)$ as $k\rightarrow\infty$, we can prove that that sequence converges to 
$v(\pi/4)$ in the end.
 
Exploiting this idea, we obtain the following convergence results without any assumption on
the feasibility status of the problem.  We consider  two typical well-known
polynomial-time algorithms by Zhang\cite{Zhang:1998:ESP} and Potra and Sheng \cite{ipm:Potra16}.  
But the idea can be applied to a broad class of infeasible interior-point algorithms to obtain analogous results.
They are stated formally in {Theorem \ref{5.1}} and  {Theorem \ref{5.2}}, and
summarized as follows:

{\em
\begin{enumerate}
\item The algorithms \cite{Zhang:1998:ESP,ipm:Potra16} 
generate asymptotically pd-feasible sequences with the duality gap $X^k\bullet S^k$ and $t^k$ converging to zero if and only if {\bf P} and {\bf D} are asymptotically pd-feasible.
\item If {\bf P} and {\bf D} are asymptotically pd-feasible,
the sequence of modified primal and dual objective values converges to a common value between the primal optimal value $v({\bf P})$
and the dual optimal value $v({\bf D})$ even in the presence of nonzero duality gap. 
\end{enumerate}
} 
\noindent 
The modified primal and dual objective values mentioned in the statements 
can be easily computed using the current iterate and 
do not require any extra knowledge. 

If {\bf P} and {\bf D} are not asymptotically pd-feasible, namely, if one of the problems
is strongly infeasible, the algorithms get stuck at a certain point and they fail to generate an asymptotically pd-feasible sequence
and fails to drive duality gap and $t^k$ to 0.
But the algorithms never fails to generate asymptotically pd-feasible sequences as long as the problems are asymptotically pd-feasible.

We note that Theorems \ref{5.1} and \ref{5.2} are to some extent surprising in that infeasible interior-point algorithms work 
in a meaningful manner without making any restrictive assumptions, at least in theory.
This might have interesting implications when solving SDP relaxations arising from
 hard optimization problems such as MISDP by using infeasible interior-point algorithms.   
The theorems guarantees that the modified objective function value converges to a value between the primal and
dual optimal values.  Therefore, the limiting modified objective value can  always be used to bound the 
optimal value of linear SDP relaxations obtained when solving MISDP via, say, branch-and-bound as in \cite{GPU18}.  We should mention, however, that if one tries to implement this idea, 
one would still need to find a way to overcome the severe numerical difficulties that may happen when attempting to solve singular SDPs directly. 

{Finally, while the results of this paper clarifies some aspects of the limiting behavior of infeasible interior-point algorithms when applied to a 
	problem with nonzero duality gap,
 we remark that deriving similar results for self-dual embedding approaches is still an open problem.}

\subsection{Related Work}
Our work is closely related to the perturbation theory and sensitivity analysis which are, of course, classic topics in the optimization literature.
In particular, there are a number of results on perturbation of semidefinite programs including \cite{BS00,Luo96dualityand,Luo97dualityresults,sturm_high} which were mentioned in the introduction. 
The book by Bonnans and Shapiro \cite{BS00}, for instance, has many results on the perturbation and sensitivity analysis of general conic programs that are also applicable to SDPs. See also \cite{RT74} for earlier results in the context of convex optimization. 
However, many of those results require that some sort of constraint qualification holds. 

In particular, in Chapter~4 of \cite{BS00} there is a discussion { of} a family of optimization problems having the format
\begin{equation}\label{eq:pu}
\min _{x\in X} f(x,u) \ \  \textrm{s.t.} \ \  G(x,u) \in \stdCone,
\end{equation}
where $f$ and $G$ are functions depending on the parameter $u$ and $\stdCone$ is a closed convex set in some Banach space.
Denote by $v(u)$, the optimal value of \eqref{eq:pu}.
For some fixed $u_0$, many results are proved about the continuity of $v(\cdot)$ \cite[Proposition~4.4]{BS00}, or the directional derivatives of $v(\cdot)$ in a neighborhood of $u_0$ \cite[Theorem~4.24]{BS00}. 

However, these existing results do not cover the situations we will deal in this paper. \cite[Proposition~4.4]{BS00}, for example, requires a condition called \emph{inf-compactness}, which implies, in particular, that the set of optimal solutions of the problem associated to $v(u_0)$ be compact. 
\cite[Theorem~4.24]{BS00}, on the other hand, requires that the set of optimal solutions associated to $v(u_0)$ be non-empty. In contrast, neither compactness nor non-emptiness is  assumed in this paper.

The perturbation we consider is closely related to the infeasible central path appearing in the primal-dual infeasible interior-point algorithms.  
In fact, we use some properties of the infeasible central path in our proof.  
The papers \cite{Lu:2004:EBL,PS_2004} showed the analyticity of the entire trajectory including the end 
point at the optimal set under the existence of primal-dual optimal solutions satisfying strict complementarity conditions.
A very recent paper \cite{doi:10.1137/19M1289327} analyzes the limiting behavior of singular infeasible central paths taking into 
account the singularity degree.
Therein, the authors analyze the speed of convergence under the assumption that the feasible region exists and 
is bounded.  No strong feasibility assumption is made, although we remark that if the feasible region of a primal SDP is non-empty and bounded, then its dual counterpart must satisfy Slater's condition.
While their analysis conducts a detailed limiting analysis 
on the asymptotic behavior of the central path, 
our analysis deals with the limiting behavior of the optimal value of the perturbed system under
weaker assumptions.

In reality, it may be necessary to estimate the error of an approximate optimal solution to a problem with a finite perturbation.  
In this regard, an interesting and closely related topic to the limiting perturbation analysis is error bounds.  
The error bound analysis is relatively easy under primal-dual strong feasibility, but it becomes much harder for singular SDPs.
See \cite{sturm_handbook,sturm_error_2000} for SDP and SOCP, and \cite{Lourenco17} for a more general class of convex programs.  
The relationship between forward and backward errors 
of a semidefinite feasibility system is closely related 
to  its singular degree, which, roughly, is defined as the number of
facial reduction steps necessary for regularizing the problem.
Recently, some analysis of limiting behaviors of the external (or infeasible) central path involving singularity degree is 
developed in \cite{doi:10.1137/19M1289327}.   
Finally,  we mention \cite{WS16} which conducted a sensitivity analysis of SDP under perturbation of the coefficient matrices \lq\lq$A_i$''. 

\section{Preliminaries}\label{sec:prel}

In this section, we introduce three ingredients of this paper, namely, asymptotic strong duality, infeasible interior-point algorithms
and real-algebraic geometry.

\subsection{Asymptotic Strong Duality}

A main difference between the duality theory in linear programming and general convex programming is that the latter requires some regularity conditions
for  strong duality to hold.  If such regularity condition is violated, then the primal and dual may have nonzero duality gap \cite{Ramana95anexact}.
Nevertheless, the so-called \emph{asymptotic strong duality} holds even in such singular cases
\cite{Duffin1957,bams/1183530289,BS00,Renegar_book_2001,Luo96dualityand,Luo97dualityresults,sturm_high}.
Here we quickly review the result and work on it a bit to derive a modified and simplified version suitable for our purposes.

Let $\hbox{\rm a-val}({\bf P})$ and $\hbox{\rm a-val}({\bf D})$ be
\begin{align}
\hbox{\rm a-val}({\bf P})& \equiv \lim_{\varepsilon\downarrow0} \inf_{\|\Delta b\|<\varepsilon} \inf\{C\bullet X |\  A(X) = b+\Delta b,\ X\succeq 0\}, \nonumber \\
\hbox{\rm a-val}({\bf D}) &\equiv \lim_{\varepsilon\downarrow0} \sup_{\|\Delta C\|<\varepsilon} 
\sup \{b^T y| \ C+\Delta C -\sum_{i} A_i y_i \succeq 0 \}. \label{avalD}
\end{align}
Here, $\hbox{\rm a-val}({\bf P})$ and $\hbox{\rm a-val}({\bf D})$ are called the \emph{asymptotic optimal values of {\bf P} and {\bf D}}, respectively \cite{Renegar_book_2001}. 
(It is also called {\em subvalue} in \cite{Duffin1957,bams/1183530289,BS00,Luo96dualityand,Luo97dualityresults}.)
The following asymptotic duality theorem holds, see also 
\cite[Theorem~1]{Duffin1957}, \cite[Lemmas~1 and 2]{bams/1183530289},  
\cite[Theorem~2]{Luo96dualityand},\cite[Theorem~6]{Luo97dualityresults} for similar statements.

{\bf Theorem}\ [Asymptotic Duality Theorem, e.g.,  {\cite[{{Theorem}~3.2.4}]{Renegar_book_2001}}]
{\em
\begin{enumerate}
\item If {\bf P} is asymptotically feasible, then, $\hbox{\rm a-val}({\bf P}) = v({\bf D})$.
\item If {\bf D} is asymptotically feasible, then, $\hbox{\rm a-val}({\bf D}) = v({\bf P})$.
\end{enumerate}
}

\medskip\noindent
Note that the Asymptotic Duality Theorem  includes the cases where $\hbox{\rm a-val}(\cdot)=\pm\infty$.

Now we develop a simplified version of the  Asymptotic Duality Theorem.
Let $\varepsilon \geq 0$, and let ${\bf D}$($\varepsilon$) be
${\bf D}(\varepsilon,0)$, i.e., 
the relaxed dual problem
\begin{equation}\label{relaxedual}
\max\,\, b^T y \ \ \hbox{s.t.} \ C - \sum_{i=1}^m A_i y_i +\varepsilon \Id = S,\ \ \ S\succeq 0.
\end{equation}
According to the notation introduced in Section \ref{subsec2.2},
the optimal value of \eqref{relaxedual} is written as $v(\varepsilon, 0)$.  Recall also that
\[
\lim_{\varepsilon\downarrow 0} v(\varepsilon, 0) = \av(0) =\av({\bf D}).
\]

Next we consider an analogous relaxation at the primal side.
Notice that (\ref{relaxedual}) is obtained by shifting the semidefinite cone by $-\varepsilon \Id$.
The analogous perturbation of the primal problem is given by
\begin{equation}\label{PPrelax}
\min \ C\bullet \widetilde X \ \ \hbox{s.t.}\ A(\widetilde X)=b, \ \widetilde X\succeq -\eta \Ip,
\end{equation}
where $\eta\geq0$.
Letting $X\equiv\widetilde X + \eta \Ip$, we obtain 
\begin{equation}\label{PPrelax2}
 \min \ C\bullet X -\eta C\bullet \Ip \ \ \hbox{s.t.}\ A(X) = b+ \eta A(\Ip),\ \ X\succeq 0.
\end{equation}
The optimal value of \eqref{PPrelax} is monotone decreasing in $\eta$, because the feasible region enlarges as
$\eta$ is increased (strictly speaking, it does not shrink).  Observe also that this problem is ${\bf P}(0, \eta)$ with the objective function 
shifted by a constant $- \eta C\bullet \Ip$.  Since this constant vanishes as $\eta\rightarrow 0$, we obtain 
\[
\av\left(\frac{\pi}2\right)=\av({\bf P})= \lim_{\eta\downarrow 0}v(0,\eta)=
 \lim_{\eta\downarrow 0}\{\hbox{The\ optimal\ value\ of\ (\ref{PPrelax2})}\}.
\]

%

{ Now we prove Theorem \ref{3.2}, which is a simplified version of the 
asymptotic duality theorem discussed above. 
Compared with the asymptotic duality results discussed in \cite{Duffin1957,bams/1183530289,BS00,Luo96dualityand,Luo97dualityresults, Renegar_book_2001}, 
the key difference is that we only consider perturbations along a single direction in each of the primal and dual problems, while in the aforementioned works the perturbation space is larger. Indeed, in the 
 Asymptotic Duality Theorem  (as stated above), the  perturbation space is $\|{\Delta b}\| < \epsilon$ and $\| \Delta C \| < \epsilon$ at the primal and dual sides, respectively.
 In contrast, in Theorem \ref{3.2} below,  we only consider perturbations along a single direction at each of the primal and dual problems (i.e., along $I_p$ and $I_d$, respectively). 
 Since it is not a priori obvious that the smaller perturbation space is still enough to close the duality gap, we provide a detailed proof showing how to go from the Asymptotic Duality Theorem to Theorem~\ref{3.2}.

}
\begin{theorem}\label{3.2}The following statements hold.
\begin{enumerate}
\item If {\bf D} is asymptotically feasible, then 
\begin{equation}\label{T3D}
\av(0)=\av({\bf D}){=\lim_{\varepsilon\downarrow 0}v(\varepsilon, 0)}=v({\bf P}).
\end{equation}
\item If {\bf P} is asymptotically feasible, then
\begin{equation}\label{T3P} 
\av({\pi}/2)=\av({\bf P}){=\lim_{\eta\downarrow 0}v(0,\eta)}=v({\bf D}).
\end{equation} 
\end{enumerate}

\end{theorem}

\begin{proof}
Recall that by definition (see \eqref{eq:vadvap}), we have 
$\av(0)=\av({\bf D})$ and $\av({\pi}/2)=\av({\bf P})$.

First we show that $\av({\bf D})=v({\bf P})$.
From the Asymptotic Duality Theorem, $\hbox{\rm a-val}({\bf D}) = v({\bf P})$ holds including the special cases where $\hbox{\rm a-val}({\bf D}) = \pm \infty$.
We observe that $\hbox{a-val}({\bf D})$ satisfies
\[
\hbox{\rm a-val}({\bf D})=\lim_{\varepsilon\downarrow0} \sup_{y,\Delta C}\ \{b^T y \mid C+\Delta C - \sum_i A_i y_i \succeq 0,\ \|\Delta C\| \leq \varepsilon\},
\]
where $\|\Delta C\|<\varepsilon$ in \eqref{avalD} is changed to $\|\Delta C\|\leq\varepsilon$.

Since $\av(0)$ is obtained by restricting the condition on $\Delta C$ from ``$\|\Delta C\|\leq\varepsilon$'' to 
``$\Delta C = \varepsilon \Id/\|\Id\|$'', we obtain $\av(0) \leq\hbox{\rm a-val}({\bf D})= v_a({\bf D})$.  
We also have the converse inequality $\av(0) \geq v_a({\bf D})$ because
\begin{align*}
\hbox{\rm a-val}({\bf D}) &= \lim_{\varepsilon\downarrow0} \sup \{ b^T y \mid C+\Delta C - \sum_i A_i y_i \succeq 0,\ \|\Delta C\|\leq \varepsilon \} \\
&= \lim_{\varepsilon\downarrow0} \sup \{ b^T y \mid C+\Delta C - \sum_i A_i y_i \succeq 0,\ -\varepsilon I \preceq \Delta C \preceq \varepsilon I\} \\
&\leq \lim_{\varepsilon\downarrow0} \sup \{ b^T y \mid C+\Delta C - \sum_i A_i y_i \succeq 0,\ \Delta C \preceq \varepsilon I\} \\
&\leq \lim_{\varepsilon\downarrow0} \sup \{ b^T y \mid C+\Delta C - \sum_i A_i y_i \succeq 0,\ \Delta C \preceq \varepsilon \|\Id^{-1}\|\Id\} \\
&\leq \lim_{\varepsilon\downarrow0} \sup \{b^T y \mid C+\varepsilon \Id - \sum_i A_i y_i \succeq 0 \} =\av({\bf D}).
\end{align*}
Here we used $I \preceq \|\Id^{-1}\|\Id$ for the second inequality.
{The proof of item 1 is complete.}

We proceed to prove item 2.
{From the Asymptotic Duality Theorem again, we have $v({\bf D})=\hbox{a-val}({\bf P})$.
Hence, for the sake of proving assertion~2, it suffices to show that $\av({\bf P})=\hbox{a-val}({\bf P})$.
}  
The proof of the inequality $\av({\bf P})\geq\hbox{a-val}({\bf P})$ is analogous to the proof for $\av({\bf D})\leq\hbox{a-val}({\bf D})$.  
We will now show the converse inequality. 
If $\hbox{a-val}({\bf P})=+\infty$, 
then $\av({\bf P})\geq\hbox{a-val}({\bf P})$ implies that $\av({\bf P}) = +\infty$.
Therefore, in what follows we assume that $\hbox{a-val}({\bf P})<+\infty$.

{By assumption, ${\bf P}$ is not strongly infeasible (see Section~\ref{sec:setup}).}
By the definition of $\hbox{a-val}({\bf P})$, for every $\varepsilon > 0$ sufficiently small, there exist ${X_{\varepsilon}}$ and
${\Delta b_{\varepsilon}}$ such that $\|\Delta b_{\varepsilon}\|\le \varepsilon$, ${X_{\varepsilon}}$ is feasible to ``$A(X)=b + {\Delta b_{\varepsilon}}, \ X\succeq 0$'', and 
\begin{equation}
\hbox{a-val}({\bf P}) = \lim _{\varepsilon \downarrow 0 } C \bullet X_{\varepsilon}.  \label{eq:xhate}
\end{equation}
Note that this is still valid even when $\hbox{a-val}({\bf P})=-\infty$.

In addition, the fact that ${\bf P}$ is not strongly infeasible implies the 
existence of a solution to the system ``$A(X') = b$''. 
As a consequence, ``$A(Y)=\Delta b_{\varepsilon}$'' too has a solution when $\Delta b_{\varepsilon}$ is as described above.
{Otherwise, ``$A(X) = b+\Delta b_{\varepsilon}$'' is infeasible, contradicting the existence of $X_{\varepsilon}$ above.}

{ Next, we show that
there exists $M > 0$ depending only on  $A$ such that ``$A(Y)=\Delta b_{\varepsilon}$'' has a solution with norm bounded by $M \norm{\Delta b_{\varepsilon}}$.
{Let $\mathcal{V}$ denote the set of solutions to ``$A(Y)=\Delta b$'' and let $S$ be a symmetric matrix. Denote by $\text{dist}\,(S,\mathcal{V}) $ the Euclidean distance between $S$ and $\mathcal{V}$. Hoffman's lemma (e.g., \cite[Theorem~11.26]{Gu10}) says that there exists a constant $M$ depending on $A$ but not on $\Delta b$ such that for every $S$, we have that $\text{dist}\,(S,\mathcal{V}) $ is bounded above by $M\norm{\Delta b-A(S)}$. Taking  $S = 0$, we conclude the existence of $Y$ satisfying $A(Y)= \Delta b$ and 
  $\norm{Y} \leq M\norm{\Delta b}$. }}
  
Let $Y_{\varepsilon}$ be one such solution. Then $\|Y_{\varepsilon}\|\le M\|\Delta b_{\varepsilon}\|\le M\varepsilon$ for each sufficiently small $\varepsilon>0$ and hence
\begin{equation}
  \lim_{\varepsilon\downarrow 0}\|Y_{\varepsilon}\|=0. \label{eq:normYe}
\end{equation}
Observing that $\|\Ip^{-1}\|\Ip \succeq I$ and 
$\|Y_{\varepsilon}\|I - Y_{\varepsilon}\succeq 0$ yield $\|Y_{\varepsilon}\|\|\Ip^{-1}\|\Ip - Y_{\varepsilon}\succeq 0$,
  we let 
\begin{equation*}
X'_{\varepsilon} \equiv X_{\varepsilon} + \|Y_{\varepsilon}\|\|\Ip^{-1}\|\Ip - Y_{\varepsilon}. \label{eq:Xprime}
\end{equation*}
With that, $X'_{\varepsilon}$ is  positive semidefinite and is a feasible solution to 
${\bf P}(0, \eta)$ with $\eta=\|Y_{\varepsilon}\|\|\Ip^{-1}\|$ (see (\ref{Pptbdalt})).  Furthermore,
\begin{equation}\label{eq:limit}
|C\bullet X_{\varepsilon} - C\bullet X'_{\varepsilon}|
=|C\bullet(\|Y_{\varepsilon}\|\|\Ip^{-1}\|\Ip - Y_{\varepsilon})| \leq 2\|C\| \|Y_{\varepsilon}\|\|\Ip^{-1}\|\| \Ip\|_F,
\end{equation}
which approaches $0$ by driving $\varepsilon\to 0$ because of \eqref{eq:normYe}.

We are now ready to show the desired assertion.
Notice that
we have $\lim_{\varepsilon\downarrow 0} C \bullet X'_{\varepsilon}\ge \av({\bf P})$, since $X'_{\varepsilon}$ is feasible to ${\bf P}(0, \|Y_{\varepsilon}\|\|\Ip^{-1}\|)$ and \eqref{eq:normYe} holds.
This fact combined with \eqref{eq:xhate} and \eqref{eq:limit} implies $\av({\bf P})\leq\hbox{a-val}({\bf P})$.
The proof is complete.
%
\qed
\end{proof}


Theorem~\ref{3.2} motivates our subsequent discussion and leads naturally to an examination of what happens when {\bf P} and {\bf D} are \emph{simultaneously} perturbed, which is the focus of Theorems~\ref{2.1}, \ref{2.2} and \ref{4.2}. 

\subsection{Infeasible Primal-dual Interior-point Algorithms}\label{IPM}

We introduce some basic concepts of infeasible primal-dual interior-point algorithms for SDP
\cite{Zhang:1998:ESP,ipm:Potra16,YFK2003,SDPT3_2003}. 
This is because our analysis leads to a novel convergence property of the infeasible primal-dual 
interior-point algorithms when applied to singular problems.  We also need some
theoretical results about infeasible interior-point algorithms 
in the proof of Theorem \ref{2.1}.  In this subsection, we assume that $A_i$ $(i=1, \ldots, m)$ are
linearly independent.  
This assumption is not essential but to ensure uniqueness of $y$ and $\Delta y$  in the system of equations
of the form $S=\sum_i A_i y_i +C'$ and $\Delta S = \sum_i A_i \Delta y + R'$ with respect to $(S,y)$ and $(\Delta S,\Delta y)$, respectively, where $C'$ and $R'$ are constants, which appear throughout the analysis.

\subsubsection{Outline of infeasible primal-dual interior-point algorithms}\label{3.2.1}

Primal-dual interior-point methods for {\bf P} and {\bf D} are based on the following optimality conditions:
\begin{equation}\label{pd-formulation}
XS = 0,\ \ \ C - \sum_i A_i y_i = S,\ \ \ A(X) =b\ \ \ X \succeq 0,\ \ \ S \succeq 0.
\end{equation}
Rather than solving this system directly, a relaxed problem
\begin{equation}\label{pd-r-formulation}
XS = \nu I,\ \ \ C - \sum_i A_i y_i = S,\ \ \ A(X) = b, \ \ X \succeq 0,\ \ \ S \succeq 0,
\end{equation}
is considered, 
where $\nu > 0$.  The algorithm solves (\ref{pd-formulation}) by solving (\ref{pd-r-formulation}) approximately 
and reducing $\nu$ gradually to zero repeatedly.   This amounts to following the central path
\begin{equation}\label{cpath}
\{(X_\nu,S_\nu,y_\nu) \mid  (X,S,y)=(X_\nu,S_\nu,y_\nu)\hbox{\ is\ a\ solution to \eqref{pd-r-formulation}},\ \nu\in (0, \infty]\}
\end{equation}
towards ``$\nu = 0$''.  
Let us take a closer look at the algorithm proposed by Zhang, more precisely, Algorithm-B of  \cite{Zhang:1998:ESP}.

Let $(X, S, y)$ be the current iterate such that $X\succ 0$ and $S\succ 0$.  
The method employs the Newton direction to solve the system (\ref{pd-r-formulation}).
More precisely, 
the first equation $XS=\nu I$ is replaced with an equivalent symmetric reformulation
\begin{equation} \label{Phi}
\Phi(X,S) = \frac12(PXSP^{-1}+P^{-1}SXP)=\nu I,
\end{equation}
where $P$ is a constant nonsingular matrix. In Zhang's algorithm, the constant matrix $P$ is set to $S^{1/2}$.
Then we consider a modified nonlinear system of equations to \eqref{pd-r-formulation} where $XS=\nu I$ is replaced with \eqref{Phi}.
The Newton direction 
$(\Delta X, \Delta S, \Delta y)$ for that modified system
at the point $(X, S, y)$ 
is the  unique solution to the following system of linear equations.
\begin{equation}\label{Newton}
\Phi(X,S)+ L_\Phi(\Delta X, \Delta S) = \nu I, \ C - \sum_i A_i (y_i +\Delta y_i) = S + \Delta S,\ A(X+\Delta X) = b,
\end{equation}
where $L_\Phi$ is a linearization of $\Phi(X,S)$. 

Starting from the $k$th iterate $(X^k,S^k,y^k)=(X,S,y)$, the next iterate $(X^{k+1}, S^{k+1}, y^{k+1})$ is determined as:
\begin{equation}\label{catA}
(X^{k+1}, S^{k+1}, y^{k+1}) = (X^k, S^k, y^k) + s^k (\Delta X, \Delta S, \Delta y).
\end{equation}
The stepsize $0 < s^k \leq 1$ is chosen not only so that $X^{k+1}$ and $S^{k+1}$ are strictly positive but also carefully 
so that they stay close to the central path  in order to ensure good convergence properties.  Then $\nu$ is updated appropriately and the iteration continues.  

Now we briefly describe another representative polynomial-time infeasible primal-dual interior-point algorithm developed by Potra and Sheng \cite{ipm:Potra16}. Let $(X^0, S^0, y^0)$ be a point satisfying $X^0\succ 0$ and $S^0\succ 0$ and consider the path defined
as follows.
\begin{eqnarray}
&&\{(X,S, y) \mid  XS = t I,\ \ \ C - \sum A_i y_i -S = t(C - \sum A_i y_i^0 -S^0),\nonumber \\
&&\ \ \ A(X) - b =t(A(X^0)-b), \ X \succeq 0,\ S \succeq 0,\ t\in (0,1]\}. \label{cpath2}
\end{eqnarray}
The algorithm follows this path by driving $t\rightarrow 0$ and using a predictor-corrector method.

We note that polynomial-time convergence is proved for both algorithms \cite{Zhang:1998:ESP,ipm:Potra16} assuming
the existence of optimal solutions $(X^*,S^*,y^*)$ to {\bf P} and {\bf D}. In the analysis,  the initial iterate 
$(X^0,S^0, y^0)$ is set to $(\rho_0 I, \rho_1 I, 0)$ where $\rho_0$ and $\rho_1$ are selected to be 
large enough in order to satisfy the conditions $X^0-X^*\succ 0$ and $S^0-S^*\succ 0$.  
Although the polynomial convergence analysis was conducted using this initial iterate,  
the algorithms themselves can be applied to any SDP problem by choosing
$(X^0, S^0, y^0)$ such that $X^0\succ 0$ and $S^0\succ 0$ as the initial iterate.   

In many practical implementations of the algorithm \cite{YFK2003,SDPT3_2003}, they take different 
stepsizes in the primal and dual space for the sake of practical efficiency.
For simplicity of presentation, we only analyze the case \eqref{catA} which corresponds to the situation where we take the same stepsize in the primal-dual space.  

The following well-known property connects Theorems \ref{2.1} and \ref{2.2} to the analysis of infeasible interior-point algorithms.
\begin{proposition}\label{StpIPM}

Let $X^0 \succ 0$ and $S^0 \succ 0$, and let  $\{(X^k, S^k, y^k)\}$ be a sequence
generated by the primal-dual infeasible interior-point algorithms in \cite{Zhang:1998:ESP,ipm:Potra16}
with initial iterate $(X^0, S^0, y^0)$.
Let $I'_d\equiv S^0 - (C-\sum A_i y_i^0)$ and let $I'_p \equiv X^0 - \widetilde X$ where $A(\widetilde X) = b$. Then, there exists a nonnegative 
sequence $\{t^k\}$ such that the following equations hold:
\begin{equation} \label{0223}
(C +  t^k I'_d) - \sum_i A_i y^k = S^k, \quad A(X^k) = b +  t^k A(I'_p).
\end{equation}
{\rm (}cf. The linear equality constraints of \eqref{Pptbd} and \eqref{Dptbd}{\rm )}
\end{proposition}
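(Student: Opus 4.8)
The plan is to track the two affine feasibility residuals along the iterations and show that they both contract by a common multiplicative factor at each step. Introduce the dual residual $R_d^k \equiv C - \sum_i A_i y_i^k - S^k$ and the primal residual $r_p^k \equiv A(X^k) - b$; feasibility of {\bf D} and {\bf P} is exactly the vanishing of these residuals, and the content of \eqref{0223} is precisely that $R_d^k = -t^k I'_d$ and $r_p^k = t^k A(I'_p)$ for a suitable nonnegative scalar $t^k$.

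I would first handle Zhang's algorithm. Because the constraints defining feasibility are affine, the corresponding rows of the Newton system \eqref{Newton} are enforced exactly (only the complementarity equation $\Phi = \nu I$ is genuinely linearized): the equations $C - \sum_i A_i(y_i + \Delta y_i) = S + \Delta S$ and $A(X + \Delta X) = b$ are equivalent to $\sum_i A_i \Delta y_i + \Delta S = R_d^k$ and $A(\Delta X) = -r_p^k$. Substituting the update \eqref{catA} with stepsize $s^k$ and using these two identities yields
\[
R_d^{k+1} = (1 - s^k)\,R_d^k, \qquad r_p^{k+1} = (1 - s^k)\,r_p^k,
\]
so both residuals are scaled by the same factor $1 - s^k$ at every iteration.

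By induction this gives $R_d^k = t^k R_d^0$ and $r_p^k = t^k r_p^0$ with $t^k \equiv \prod_{j=0}^{k-1}(1 - s^j)$ (the empty product being $t^0 = 1$). Since $0 < s^j \le 1$, each factor satisfies $0 \le 1 - s^j < 1$, hence $t^k \ge 0$, which is the required nonnegativity. It then remains only to identify the initial residuals with the data of the statement: by definition $I'_d = S^0 - (C - \sum_i A_i y_i^0) = -R_d^0$, so $R_d^k = -t^k I'_d$, which rearranges to the first equation of \eqref{0223}; and since $A(\widetilde X) = b$ we have $A(I'_p) = A(X^0) - A(\widetilde X) = A(X^0) - b = r_p^0$, so $r_p^k = t^k A(I'_p)$, which is the second equation of \eqref{0223}. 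For the Potra--Sheng algorithm the conclusion is read off directly from the defining path \eqref{cpath2}: each iterate lies on the affine part of that path for some parameter $t^k \in (0,1]$, and the two affine relations in \eqref{cpath2} are exactly $R_d^k = -t^k I'_d$ and $r_p^k = t^k A(I'_p)$, so one simply takes $t^k$ to be the path parameter of the $k$-th iterate.

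The only delicate point is verifying that the feasibility rows of the Newton system are enforced \emph{exactly}, so that the recursion $R_d^{k+1} = (1 - s^k) R_d^k$ holds with no error term; this is where the affineness of the primal and dual constraints is essential, since it makes their linearization exact. Everything else is bookkeeping with the step-size products and keeping the sign conventions in the definitions of $I'_d$ and $I'_p$ straight.
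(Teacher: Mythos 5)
Your proposal is correct and follows essentially the same route as the paper: an induction using the fact that the affine rows of the Newton system \eqref{Newton} are satisfied exactly, so both residuals contract by the common factor $1-s^k$, giving $t^k=\prod_{l<k}(1-s^l)$ as in \eqref{recur_t}. Your residual bookkeeping ($R_d^k=-t^kI'_d$, $r_p^k=t^kA(I'_p)$) is in fact slightly more careful with signs than the paper's own write-up, and your treatment of the Potra--Sheng case by reading $t^k$ off the path \eqref{cpath2} matches the paper's remark that the result is built into that algorithm's definition.
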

\begin{proof}
This  result is a fundamental tool used in the analysis of the algorithms in \cite{Zhang:1998:ESP,ipm:Potra16}.
For the sake of completeness, here we prove the result only for Zhang's algorithm.

We prove the first relation of (\ref{0223}) by induction.  For $k=0$, the proposition holds by taking $t^0\equiv 1$.
Suppose that the relation (\ref{0223}) holds for $k$, then, 
the search direction $(\Delta X, \Delta S, \Delta y)$ is the solution to the linear
system of equations (\ref{Newton}) with $(X, S, y) = (X^k, S^k, y^k)$.
Because of the second equation of (\ref{Newton}), we have
\[
C- \sum A_i (y_i^k +\Delta y_i) - (S^k+\Delta S)=0.
\]
Therefore, 
\[
C- \sum A_i (y_i^k + s^k\Delta y_i) -(S^k + s^k\Delta S) = (1-s^k)(C- \sum A_i y_i^k - S^k).
\]
Since $y_i^{k+1} = y_i^k + s^k\Delta y_i, S^{k+1} = S^k + s^k\Delta S$ and $t^{k+1} = (1-s^k) t^k$, we obtain
\[
C - \sum A_i y_i^{k+1} - S^{k+1} = (1-s^k)t^k I'_d=t^{k+1} I'_d
\]
as we desired, 
because $C- \sum A_i y_i^k - S^k = t^k(C -\sum A_i y_i^0 - S^0)=t^k I'_d$ holds by the induction assumption.
The primal relation, i.e., the right side in \eqref{0223}, follows similarly.
\qed\end{proof}
{\bf Remark} 
In view of Proposition~\ref{StpIPM}, by convention,
we treat $t^k$ as  a part of iterates of the algorithms.  By its construction, we have $t^0=1$ and 
\begin{equation}  \label{recur_t}
t^{k+1}=\prod_{l=0}^k (1-s^l)
\end{equation}
for $k=0, 1, \ldots$

\subsubsection{Path formed by points on the central path of perturbed problems}\label{3.2.2}
We fix $\nu$ to be a positive number, and consider the following system of equations and semidefinite
conditions parametrized by $t > 0$:
\begin{equation}\label{path}
\begin{aligned}
&&XS-\nu I=0,\ \ \  C + t\alpha \Id - \sum_i A_i y_i- S =0 , \\
&&A(X - t \beta \Ip) -b = 0,
\\ 
&&X\succeq 0,\ \ S \succeq 0.
\end{aligned}
\end{equation}
We denote by $w_\nu(t) \equiv (X_\nu(t), S_\nu(t), y_\nu(t))$ the solution of \eqref{path} (if it exists). 
 If the problem is asymptotically pd-feasible, for any $t>0$,
 { ${\bf P}(t\alpha,t\beta)$ and ${\bf D}(t\alpha,t\beta)$ are strongly feasible.
 Then} the solution of \eqref{path}
 defines a point on the central path {with parameter $\nu$} of the primal-dual pair of strongly feasible SDP:
\begin{equation}\label{ptbP}
\min\ (C + t\alpha \Id) \bullet X\ \ \ \hbox{s.t.} \  
A(X - t \beta \Ip)  = b,\ \ \ X\succeq 0
\end{equation}
and 
\begin{equation}\label{ptbD}
\max\ \sum_i (b_i + t\beta A_i\bullet \Ip) y_i\ \ \ \hbox{s.t.}\ C + t\alpha \Id - \sum_i A_i y_i= S, \ \ \ S \succeq 0,
\end{equation}
where we note that $t$ is fixed in \eqref{ptbP} and \eqref{ptbD}.
In this case, $w_\nu(t)$ is ensured to exist and is uniquely determined for all $t \in (0,\infty)$ (due to the assumption of 
linear independence of $A_i$, $i=1, \ldots, m$).
Moreover, the set 
\begin{equation} \label{pathC}
{\cal C}\equiv\{w_{\nu}(t) \mid t\in (0,\infty)\}
\end{equation}
forms an analytic path running through $\iPSDcone{n}\times\iPSDcone{n}\times\Re^m$.  
The existence and analyticity of ${\cal C}$ is a folklore result (e.g., \cite{Lu:2004:EBL,PS_2004}), but  we outline a proof in the Appendix~\ref{app} based on 
a result in \cite{Monteiro:1999:PCN}. {  We note that {the existence and analyticity of the path just relies on
	local conditions, so, the existence of optimal solutions of {\bf P} and {\bf D} is not necessary. 
}}
A special case where $\nu=1$ and $C=0$ is analyzed in \cite{sremac2017complete} in the context of
facial reduction.

Since $A(X_\nu(t))=b+t\beta A(\Ip)$, $C + t\alpha \Id - \sum_i A_i y_{\nu i}(t) = S_\nu(t)$, and
$X_\nu(t) S_\nu(t)=\nu I$ hold, we have
\begin{eqnarray}
0&\leq& (C+t\alpha \Id)\bullet X_\nu(t) -\sum_{i=1}^m (b_i+t\beta A_i\bullet  \Ip) y_{\nu i}(t)\nonumber\\
&=&(C+t\alpha \Id)\bullet X_\nu(t) -\sum_{i=1}^m A_i\bullet X_\nu(t) y_{\nu i}(t)  \nonumber \\
\nonumber\\
&=& S_\nu(t)\bullet X_\nu(t) = {\rm Tr}(X_\nu(t)S_\nu(t))={\rm Tr}(\nu I) = n \nu. \label{223a}
\end{eqnarray} 

\def\vopt{v_{\rm opt}}
Let us denote by $\vopt(t)$ the common optimal value of (\ref{ptbP}) and (\ref{ptbD}).
Since $\vopt(t)$ is between  
$(C+t\alpha \Id)\bullet X_\nu(t)$ and $\sum_{i=1}^m (b_i+t\beta A_i\bullet  \Ip) y_{\nu i}(t)$, i.e.,
\begin{equation}\label{223b}
\vopt(t)\in\left [\sum_{i=1}^m (b_i+t\beta A_i\bullet  \Ip) y_{\nu i}(t), (C+t\alpha \Id)\bullet X_\nu(t)\right]
\end{equation}
holds by weak duality, we see, together with \eqref{223a}, that
\begin{equation}\label{optvbd}
0\le (C+t\alpha \Id)\bullet X_\nu(t) - \vopt(t) \leq n\nu
\end{equation}
holds for each $t > 0$.

\subsection{Semialgebraic sets and the Tarski-Seidenberg Theorem}
A set $S$ in $\Re^k$ is called \emph{basic semialgebraic} if it can be written as the set of solutions of finitely many polynomial equalities and strict polynomial inequalities. Then, a 
set is said to be \emph{semialgebraic} if it is a union of finitely many basic semialgebraic sets.
 In particular, a semialgebraic set in $\Re$ is a union of finitely many points and intervals.
For $x = (x_1, \ldots, x_k) \in \Re^k$,  let $T(x)$ be a coordinate projection to $\Re^{k-1}$ defined as $T(x)\equiv(x_2,\ldots,x_n)$.
The Tarski-Seidenberg Theorem states that a coordinate projection of a semialgebraic set is again a semialgebraic set
in the lower-dimensional space, and described as follows.

\ \\
\noindent
{\bf  Tarski-Seidenberg Theorem}\ (e.g. Theorem 2.2.1 of \cite{bochnak_coste_roy}) \\
{\it 
Let $W \subseteq \Re^k$ be a semialgebraic set.  Then, $T(W)$ is a semialgebraic set in $\Re^{k-1}$.
}

\section{Proof of the Main Results}\label{sec:proof}


%
%
%



In this section, we prove Theorems \ref{2.1} and \ref{2.2}.  
We start with some basic properties of $v(\varepsilon,\eta)$.
\begin{proposition}\label{31} If the problem is asymptotically pd-feasible, the following statements hold. 
\begin{enumerate}
\item $v(\varepsilon, \eta)$ is well-defined for all $(\varepsilon, \eta) \geq 0$ not equal to $(0, 0)$.  Furthermore,
\begin{enumerate}[$(i)$]
\item $\lim_{\varepsilon\downarrow 0} v(\varepsilon, 0) = v({\bf P})$ and
\item $\lim_{\eta\downarrow 0} v(0,\eta) = v({\bf D})$
\end{enumerate}
hold including the cases where their values are $\pm \infty$.
\item $v(\varepsilon,\eta)$ is a
 monotone increasing concave function in $\varepsilon$. 
 (From item~$1.$,  if $\eta > 0$, $v(\varepsilon,\eta)$ is   well-defined over $[0,\infty)$. 
 If $\eta = 0$, $v(\varepsilon,\eta)$ is well-defined over $(0,\infty)$.)

\item $v_P(\varepsilon,\eta)\equiv v(\varepsilon,\eta)-\eta C\bullet \Ip-\eta\varepsilon \Id \bullet \Ip$ is 
{a monotone decreasing and convex} function in $\eta$.  (From item~$1.$,  if $\varepsilon > 0$, $v_P(\varepsilon,\eta)$ is   well-defined over $[0,\infty)$. 
If $\varepsilon = 0$, $v_P(\varepsilon,\eta)$ is well-defined over $(0,\infty)$.)
\end{enumerate}
\end{proposition}
\begin{proof}
Item~1.~follows directly from Theorem \ref{3.2}.    
Next, we move on to item~2.
Let $(\varepsilon_1,\eta)\geq 0$, $(\varepsilon_2, \eta) \geq 0$ and,
without loss of generality, we may assume that $0 \leq \varepsilon_1 < \varepsilon_2$. 
By definition, $v(\varepsilon,\eta)$ coincides with 
$v({\bf D}(\varepsilon, \eta))$ whenever $v(\varepsilon,\eta)$ is well-defined, see Section~\ref{subsec2.2}.
Then, $v(\varepsilon,\eta)$ is monotonically increasing in  $\varepsilon$ because if $y$ is feasible for ${\bf D}(\varepsilon_1, \eta)$ then $y$ is feasible for ${\bf D}(\varepsilon_2, \eta)$ too.
Next, we prove concavity and we will start by first considering the case $\eta > 0$. 

There are two sub-cases to consider: when $\varepsilon_1=0$ and when $\varepsilon_1>0$.  In the latter sub-case, ${\bf D}(\varepsilon_1, \eta)$ and
${\bf D}(\varepsilon_2, \eta)$ are both feasible, since $\varepsilon_1, \varepsilon_2$ and $\eta$ are all positive and  asymptotic primal-dual feasibility was assumed.
For simplicity, we define $\hat b$ as the vector corresponding to the objective function of ${\bf D}(\varepsilon, \eta)$ so that
\[
\hat b^Ty = \sum_{i=1}^m ({b}_i + \eta A_i\bullet \Ip)y_i,\qquad \forall y \in \Re^m.
\] 
We let $y^k$ and $\bar{y}^k$ be sequences of feasible solutions of ${\bf D}(\varepsilon_1, \eta)$ and 
${\bf D}(\varepsilon_2, \eta)$
satisfying 
\begin{equation*}
\hat b^Ty^k \to v({\bf D}(\varepsilon_1, \eta)) \quad \text{and}\quad
\hat b^T \bar{y}^k \to v({\bf D}(\varepsilon_2, \eta)).
\end{equation*}
Then, for $t \in [0,1]$, we have that $t y^k+(1-t) \bar{y}^k $ is a feasible solution to ${\bf D}(t\varepsilon_1+(1-t)\varepsilon_2, \eta)$
with objective value $\hat{b}^T(ty^k+(1-t)\bar{y}^k)$.  Then it follows 
\begin{align*}
 v({\bf D}(t\varepsilon_1+(1-t)\varepsilon_2, \eta))& = v(t\varepsilon_1+(1-t)\varepsilon_2, \eta)\\
    &\geq \hat{b}^T(ty^k+(1-t)\bar{y}^k) = t \hat{b}^Ty^k+ (1-t) \hat{b}^T \bar{y}^k.
\end{align*}
Taking the limit with respect to $k$, we obtain
\begin{equation}\label{conv0302}
 v(t\varepsilon_1+(1-t)\varepsilon_2,\eta) \geq t v(\varepsilon_1, \eta)+(1-t) v(\varepsilon_2, \eta)
\end{equation}
as we desired. 

Now we deal with the sub-case where $\varepsilon_1=0$.  
By assumption, we have $\varepsilon_2>\varepsilon_1=0$, implying that 
$v(\varepsilon_2, \eta)$ is finite.
Then, we can proceed analogously except 
that ${\bf D}(0, \eta)$ may be infeasible 
so that $v(\varepsilon_1,\eta) =v(0, \eta)=-\infty$.  
However, in that case, since ${v(t\varepsilon_1+(1-t)\varepsilon_2,\eta) =v((1-t)\varepsilon_2, \eta)}$ is finite
for all $t\in [0,1)$, we see that \eqref{conv0302} indeed holds. This concludes the proof for the case where $\eta > 0$.

Finally, we deal with the case $\eta = 0$. In this case, we may assume that $\varepsilon_1$ is positive, since $v(\varepsilon_1, 0)$ might not be well-defined otherwise.
By assumption, $\bf{D}$ is asymptotically feasible, so  ${\bf D}(\varepsilon, 0)$ is always feasible for $\varepsilon > 0$.
Thus the optimal value of ${\bf D}(\varepsilon_1, 0)$ is either finite or is $+\infty$.

There are two sub-cases to consider.
First, suppose that the optimal value of  ${\bf D}(\varepsilon, 0)$ is $+\infty$ for some $\varepsilon > 0$. Then, ${\bf P}(\varepsilon, 0)$ is infeasible.
However, the feasible region of ${\bf P}(\varepsilon, 0)$ is the same for all $\varepsilon > 0$, which implies infeasibility of ${\bf P}(\varepsilon, 0)$  for all $\varepsilon > 0$.
Consequently, $v(\varepsilon,0) = +\infty$ for all $\varepsilon > 0$ and \eqref{conv0302} holds.

The next sub-case is when the optimal value of  ${\bf D}(\varepsilon, 0)$  is finite for all $\varepsilon > 0$. 
In particular, $v({\bf D}(\varepsilon_1, \eta))$ and $v({\bf D}(\varepsilon_2, \eta))$ are both finite and we can proceed as in the proof of the case $\eta > 0$. This concludes the proof of item~2.

Now we prove item 3.
First, we recall that the optimal value of (\ref{PPrelax2}) (or, equivalently, (\ref{PPrelax})) is monotone decreasing in  $\eta$.
If we replace $C$ with $C +\varepsilon \Id$ in (\ref{PPrelax2}) we obtain
\begin{equation}\label{PPrelax3}
 \min \ (C+\varepsilon \Id)\bullet X -\eta (C+\varepsilon \Id)\bullet \Ip \ \ \hbox{s.t.}\ A(X) = b+ \eta A(\Ip),\ X\succeq 0.
\end{equation}
Similarly, the optimal value of \eqref{PPrelax3} is monotone decreasing in $\eta$, when $\varepsilon$ is fixed. 
Since \eqref{PPrelax3} differs from ${\bf P}(\varepsilon,\eta)$ by  
the term $(\eta(C\bullet \Ip)+\eta\varepsilon \Id\bullet \Ip)$ in the objective 
function, the optimal value of \eqref{PPrelax3} can be written as
\[
v(\varepsilon,\eta)-(\eta(C\bullet \Ip)+\eta\varepsilon \Id\bullet \Ip),
\]
which is precisely $v_P(\varepsilon,\eta)$.  Therefore   
$v_P(\varepsilon,\eta)$ is monotone decreasing with respect to $\eta$.

Finally, for fixed $\varepsilon$,  
$v_P(\varepsilon,\eta)$ and $v(\varepsilon,\eta)$ differ by a linear function in $\eta$. So 
to prove that $v_P(\varepsilon,\eta)$ is convex as a function of $\eta$, it is enough to prove that $v(\varepsilon,\eta)$ is convex as a function of $\eta$. 
This can be done analogously to the proof of item~2., so we omit the details. 
\qed\end{proof}

\medskip

In the following, we prove Theorem \ref{2.1}.   
The theorem claims that, even though $v(0,0)$ is not well-defined, the limiting value exists when approaching
$(0,0)$ along a straight line emanating from the origin to any direction of the first orthant.   


\medskip\noindent
{\bf (Proof of Theorem \ref{2.1})}

Although the result holds even if the $A_i$'s are linearly dependent, 
for simplicity sake, in this proof we assume linear independence of the $A_i$ $(i=1, \ldots, m)$. 
In addition, we write $v(t\alpha, t\beta)$ as $\vopt(t)$, since $v(t\alpha,t\beta)$ is the common optimal value to the
primal-dual pair ${\bf P}(t\alpha,t\beta)$ and ${\bf D}(t\alpha,t\beta)$.
We also assume that $\alpha > 0$ and $\beta>0$, since the proof for the case where either of $\alpha$ and $\beta$ is 0 
(but $(\alpha,\beta)\not=0$) has already been established in Proposition~\ref{31}.

Recall that we introduced the analytic path $\cal C$ in Section \ref{3.2.2} (See \eqref{path}--\eqref{pathC}). 
We follow the same notation described therein. The path $\cal C$ is parametrized by $t$. 
We divide the proof into the following two steps: 

\medskip\noindent
(Step 1) For any fixed $\nu>0$, we prove the monotonicity of $(C+t\alpha I_d)\bullet X_\nu(t)$ when $t>0$ is sufficiently  small.

\noindent
(Step 2) Prove the existence of $\lim_{t\downarrow 0} \vopt(t)$.

\medskip\noindent 
{\bf (Step 1)}

We analyze the behavior of $(C+t\alpha I_d)\bullet X_\nu(t)$ along the path ${\cal C}$ as $t\rightarrow 0$.
Recall that (\ref{path}) is the system parametrized by $t$ which defines the path ${\cal C}$.
By differentiating the three equations in (\ref{path}) with respect to $t$, we see that the following system of equations in $(t, X ,S , y, \delta X, \delta S, \delta y)$ (with semidefinite constraints on $X$ and $S$)
\begin{equation} 
\begin{array}{l}
X \delta S + \delta X S = 0, \\
\alpha \Id -\sum_i A_i \delta{y}_i = \delta S, \\
A_i\bullet (\delta X - \beta \Ip) = 0, \qquad (i=1, \ldots, m),\\
XS=\nu I, \\
 C + t\alpha I - \sum_i A_i y_i = S, \\
A_i\bullet (X - t \beta \Ip)  = b_i,\qquad (i=1, \ldots, m),\\
X\succeq 0,\ S\succeq 0, t > 0, 
\end{array}\label{alg}
\end{equation}
has a unique solution
\[
(t, X, S, y, \delta X, \delta S, \delta y) = \left(t, X_\nu(t), S_\nu(t), y_\nu(t), \frac{d X_\nu(t)}{dt},\frac{d S_\nu(t)}{dt}, \frac{d y_\nu(t)}{dt}\right)
\]
for each $t\in (0,\infty)$.
That is, \eqref{alg} is a system of equations with semidefinite constraints which 
determines the curve $(X_\nu(t), S_\nu(t), y_\nu(t))$ and its tangent 
$\left(\frac{d X_\nu(t)}{dt},\frac{d S_\nu(t)}{dt}, \frac{d y_\nu(t)}{dt}\right)$.
{
The reason for that is as follows.
{By the discussion in Section~\ref{3.2.2}, for fixed $t > 0$, $X_\nu(t),S_\nu(t)$ are uniquely defined. Since the $A_i$ are linearly independent, $y_{\nu(t)}$ must be unique  as well. In order to see that $\delta X, \delta S, \delta y$ are also uniquely determined, we take a look at the first three equations of \eqref{alg} for fixed positive definite matrices $X$ and $S$. They become linear equations in $\delta X, \delta S, \delta y$ and determine a unique solution if and only if the kernel of $\phi: (U, V, z )\mapsto (X U + V S,V + \sum _{i}A_i z_i, A(U) )$ is trivial. Suppose $\phi(U,V,z) = 0$.  Then, $U\bullet V  = 0$. Considering the first component of $\phi$, we have the equation $XU = -VS$, which implies that $\nu U = -SVS$. Taking the inner product with 
$V$, we obtain $0 = (SVS)\bullet V = \|S^{1/2}VS^{1/2}\|_F^2 $. Therefore, $S^{1/2}VS^{1/2} = 0$ and since $S$ is invertible, $V = 0$. By $\nu U = -SVS$, we have $U = 0$.	
}

Now we are ready to proceed.}  Let us denote by ${\cal D}$ the set of solutions to (\ref{alg}) as follows:
\[
{\cal D}=\{(t, X, S, y, \delta X, \delta S, \delta y)\mid (t, X, S, y, \delta X, \delta S, \delta y)\hbox{\ satisfies\ (\ref{alg}).}\}
\] 
Each element of ${\cal D}$ can be seen as 
a pair consisting  of a point on ${\cal C}$ and
its tangent.
Since the semidefinite conditions $S\succeq 0$ and $X\succeq 0$ can be written as the solution set of finitely many polynomial inequalities, ${\cal D}$ is a semialgebraic set.


Now we
claim that $(C + t\alpha \Id) \bullet X_\nu(t)$ is either monotonically increasing or monotonically decreasing for sufficiently small $t$.
To this end, we analyze the set of local minimum points and local maximum points of $(C + t\alpha \Id)\bullet X_\nu(t)$ over 
$(0,\infty)$. A necessary condition for local minimum and maximum points is:
\[
\frac{d(C + t\alpha \Id)\bullet X_\nu(t)}{dt} = (C+\alpha \Id) \bullet \frac{d X_\nu(t)}{dt} + \alpha \Id \bullet X_\nu(t) =0.
\]
Recall that for $\hat t>0$, 
$(\frac{d X_\nu}{dt}(\hat t),\frac{d S_\nu}{dt}(\hat t), \frac{d y_\nu}{dt}(\hat t))$
is the tangent part $(\delta X, \delta S, \delta y)$ of the unique solution to (\ref{alg}) with $t=\hat t$. 
With that in mind, a necessary condition for $(C+t \alpha \Id)\bullet X_\nu(t)$ to have an extreme value at $t$ is that $t$ is in the set
\[
{\cal T}_1 \equiv \{t \mid (t, X, S, y, \delta X, \delta S, \delta y)\in {\cal T}\},
\]
where
\[
{\cal T} \equiv\{ (t, X, S, y, \delta X, \delta S, \delta y)\in {\cal D} \mid \  (C+t\alpha \Id)\bullet \delta X+\alpha \Id\bullet X = 0\}.
\]
Since ${\cal D}$ is a semialgebraic set, so is ${\cal T}$.  Since ${\cal T}_1$ is the projection of ${\cal T}$ onto the $t$ coordinate, 
by applying the Tarski-Seidenberg Theorem, we see that ${\cal T}_1$ is a semialgebraic set.

Thus, ${\cal T}_1$ is a semialgebraic set contained in $\mathbb{R}$, therefore ${\cal T}_1$ can be expressed as a union of finitely many points and intervals over $\Re$.
Since $(C+t\alpha \Id)\bullet X_\nu(t)$ is an analytic function (see Section~\ref{3.2.2}), the same is true for its derivatives. 
Therefore,
if ${\cal T}_1$ contains an interval,
then the derivative of $(C+t\alpha \Id)\bullet X_\nu(t)$  with respect to $t$ must, in fact, be zero throughout $(0,\infty)$\footnote{Here we are using the fact that the zero function is analytic and if two real analytic functions $f:(0,\infty)\to \mathbb{R}$, $g:(0,\infty)\to \mathbb{R}$ coincide in some interval $(a,b)$ with $a < b$, then $f$ and $g$ coincide throughout $(0,\infty)$, e.g., \cite[Corollary~1.2.6]{KP02}.}. In particular, $(C+ t\alpha \Id)\bullet X_\nu(t)$ is constant
for all $t > 0$.  Thus, $(C+t\alpha \Id)\bullet X_\nu(t)$ is a monotonically increasing/decreasing function in this case.

Now we deal with the case where ${\cal T}_1$ consists of a finite number of points only. We recall that
$(C+t\alpha \Id)\bullet X_\nu(t)$ takes an extreme value at $t$ only if  $t\in {\cal T}_1$.
This implies that the number of { extremal} points of $(C+t\alpha \Id)\bullet X_\nu(t)$ is finite and hence $(C+t\alpha \Id)\bullet X_\nu(t)$ is monotonically increasing or monotonically 
decreasing for sufficiently small $t$.

\medskip\noindent
{\bf (Step 2)}

It follows from Step 1 that there are three possibilities.
\begin{enumerate}[(i)]
	\item $\lim_{t\downarrow 0} (C+t\alpha \Id)\bullet X_\nu(t) = \infty$,
	\item $\lim_{t\downarrow 0}(C+t\alpha \Id) \bullet X_\nu(t)= -\infty$,
	\item $\lim_{t\downarrow 0} (C+t\alpha \Id)\bullet X_\nu(t)$ is a finite value.
\end{enumerate}
First we consider cases (i) and (ii).
Recalling \eqref{optvbd}, we have
$|(C+t\alpha \Id)\bullet X_\nu(t)-\vopt(t)|\leq n\nu$.
Therefore,  $\vopt(t)$ diverges to $+\infty$ and $-\infty$, respectively.  This corresponds to the case of the theorem where
the limit is $\pm \infty$.

Next, we proceed to case (iii).
In this case, $\vopt(t)$ is bounded for sufficiently small $t >0$ because
$|\vopt(t) - (C+t\alpha \Id)\bullet X_\nu(t)|\leq n\nu$ and $(C+\alpha \Id)\bullet X_\nu(t)$ is bounded for sufficiently small
$t>0$.
Therefore, 
there exist three constants $M_1, M_2$, and $\bar{t}>0$ such that
$M_1 < M_2$ and $\bar t >0$ for which
\[
\vopt(t) \in [M_1, M_2]\quad \hbox{if}\ t\in (0, \bar t].
  \]
For the sake of obtaining a contradiction, we assume that $\vopt(t)$ does not have a limit as $t\rightarrow 0$.  Then, 
there exists an infinite sequence $\{t^k\}$ with $\lim_{k\rightarrow\infty} t^k \rightarrow 0$
where $\{\vopt(t^k)\}$ has two distinct accumulation points, $v_1$ and $v_2$, say.
Without loss of generality, we let $v_1 > v_2$ and $z \equiv v_1 - v_2$.

Let $\tilde\nu \equiv z/(6n)$.
By  Step~1, it follows that $(C+t\alpha \Id) \bullet X_{\tilde\nu}(t)$
is a monotone function for sufficiently small $t>0$.  
Furthermore, since $\vopt(t)$ is bounded for sufficiently small $t$, 
(\ref{optvbd}) implies that $(C+t\alpha \Id) \bullet X_{\tilde\nu}(t)$ does not diverge and has a limit as $t\downarrow 0$.
Let us denote by $c_{\tilde\nu}^*$ the limit value, and let $\tilde t >0$ be such that
\begin{equation}\label{z/6a}
|(C+t\alpha \Id) \bullet X_{\tilde\nu}(t) -c_{\tilde\nu}^*|\leq\frac{z}6
\end{equation}
holds for any $t \in (0,\tilde t]$.  
On the other hand,
\begin{equation} \label{z/6b}
|(C+t\alpha \Id) \bullet X_{\tilde\nu}(t) -\vopt(t)|=
(C+t\alpha \Id) \bullet X_{\tilde\nu}(t) -\vopt(t) \leq n\tilde\nu =\frac{z}6
\end{equation}
holds due to (\ref{optvbd}).  Adding \eqref{z/6a}, \eqref{z/6b} and using the triangular inequality, we see that
\[
|c^*_{\tilde\nu}-\vopt(t)|\leq \frac{z}{3}, \ \ \ \hbox{i.e.}, \ \ \ 
c^*_{\tilde\nu}-\frac{1}{3}z\le \vopt(t) \le c^*_{\tilde\nu}+\frac{1}{3}z
\]
holds for any $t \in (0,\tilde t]$.
Together with the fact that $v_1>v_2$ are the two accumulation points of $\{\vopt(t)\}$, the above relation yields
\[
c^*_{\tilde\nu}-\frac{1}{3}z\le v_2< v_1 \le c^*_{\tilde\nu}+\frac{1}{3}z.
\] 
This implies  $z= v_1-v_2\le 2z/3$ and hence $z \leq 0$, which, however, contradicts $z >0$. 
Therefore, the accumulation point of $\vopt(t)$ is unique and the limit of $\vopt(t)$ exists as $t\downarrow 0$.
\qed

\medskip

Now we are ready to prove Theorem \ref{2.2}. Let 
\[
\tilde v(\beta) \equiv \lim_{t\downarrow 0}v(t, t\beta)\ \hbox{for}\ \beta\in[0,\infty),\ \ 
\tilde v(\infty) \equiv \lim_{t\downarrow 0}v(0, t). 
\]
We note that
\[
\av(\theta) = \lim_{t\downarrow 0} v(t\cos\theta, t\sin\theta) = \tilde v(\tan\theta).
\]


\medskip

Theorem \ref{2.2} is a direct consequence of the following theorem.
\begin{theorem}\label{4.2}
If the problem is asymptotically pd-feasible,
then $\tilde v(\beta)$ is a monotone decreasing function in $\beta$ in the interval $[0,+\infty]$ and the following relation holds. 
\[ 
v({\bf D})=\tilde v(\infty)
\leq \tilde v(\beta) \leq
\tilde v(0) =v({\bf P}).
\]
Furthermore, $\tilde v(\beta)$ is a convex function in the interval $[0,\infty)$.
\end{theorem}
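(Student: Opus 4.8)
The plan is to reduce everything to the two monotonicity properties already recorded in Proposition~\ref{31}, together with the existence of the directional limits established in Theorem~\ref{2.1}. First I would dispose of the endpoints: by the first item of Proposition~\ref{31} we have $\tilde v(0)=\lim_{\varepsilon\downarrow 0}v(\varepsilon,0)=v({\bf P})$ and $\tilde v(\infty)=\lim_{\eta\downarrow 0}v(0,\eta)=v({\bf D})$, so the two outer equalities in the claimed chain are immediate and the only substantive work is the monotonicity of $\tilde v$ on $[0,+\infty]$ and the resulting sandwiching.

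The core step is to prove that $\tilde v$ is decreasing on the finite part $[0,\infty)$. Here I would exploit the third item of Proposition~\ref{31}, namely that $v_P(\varepsilon,\eta)=v(\varepsilon,\eta)-\eta\,C\bullet\Ip-\eta\varepsilon\,\Id\bullet\Ip$ is decreasing in $\eta$ for fixed $\varepsilon$. Taking $0\le \beta_1<\beta_2<\infty$, fixing $\varepsilon=t>0$, and setting $\eta_i=t\beta_i$, the inequality $v_P(t,t\beta_1)\ge v_P(t,t\beta_2)$ rearranges to
\[
v(t,t\beta_1)-v(t,t\beta_2)\ \ge\ t(\beta_1-\beta_2)\bigl(C\bullet\Ip+t\,\Id\bullet\Ip\bigr),
\]
whose right-hand side tends to $0$ as $t\downarrow 0$. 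Since both limits $\tilde v(\beta_1)=\lim_{t\downarrow0}v(t,t\beta_1)$ and $\tilde v(\beta_2)=\lim_{t\downarrow0}v(t,t\beta_2)$ exist in $[-\infty,+\infty]$ by Theorem~\ref{2.1}, passing to the limit yields $\tilde v(\beta_1)\ge \tilde v(\beta_2)$.

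It remains to splice in the endpoint $\beta=\infty$, i.e.\ to show $\tilde v(\beta)\ge \tilde v(\infty)=v({\bf D})$ for every finite $\beta$; this is the step that cannot be obtained from the $v_P$-comparison, because that comparison keeps $\varepsilon$ fixed whereas here $\varepsilon$ must drop to $0$. The device I would use is the monotonicity of $v$ in $\varepsilon$ (second item of Proposition~\ref{31}): for $\beta>0$ and $t>0$ one has $v(t,t\beta)\ge v(0,t\beta)$, and letting $t\downarrow 0$ (so that $t\beta\downarrow 0$) gives $\tilde v(\beta)\ge\lim_{\eta\downarrow0}v(0,\eta)=v({\bf D})$; for $\beta=0$ the same bound $\tilde v(0)=v({\bf P})\ge v({\bf D})$ is just weak duality. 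Combined with the decrease on $[0,\infty)$ and the endpoint identifications, this produces the full chain $v({\bf D})=\tilde v(\infty)\le \tilde v(\beta)\le \tilde v(0)=v({\bf P})$ and extends monotonicity to all of $[0,+\infty]$.

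The main obstacle I anticipate is bookkeeping around the extended reals: because $v(\varepsilon,\eta)$ may equal $\pm\infty$ (weak infeasibility of ${\bf P}$ or ${\bf D}$ forces $v({\bf P})=+\infty$ or $v({\bf D})=-\infty$), the passage to the limit in the displayed inequality must be argued case-by-case rather than by naive subtraction. Concretely, if $\tilde v(\beta_2)=+\infty$ the boundedness of the correction term forces $\tilde v(\beta_1)=+\infty$; if $\tilde v(\beta_1)=-\infty$ it forces $\tilde v(\beta_2)=-\infty$; and when both limits are finite one simply subtracts. A secondary subtlety worth emphasizing is that $v(\varepsilon,\eta)$ itself is \emph{not} monotone in $\eta$ — only the shifted quantity $v_P$ is — so one must carry the constant shift $\eta\,C\bullet\Ip+\eta\varepsilon\,\Id\bullet\Ip$ explicitly and verify that it is of order $t$ along each ray, which is precisely what makes it vanish in the limit.
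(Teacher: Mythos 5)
Your proposal is correct and follows essentially the same route as the paper: both reduce the finite-part monotonicity to item~3 of Proposition~\ref{31} via the shifted quantity $v_P$, whose correction term is $O(t)$ along each ray, and both handle the $\beta=\infty$ endpoint via the $\varepsilon$-monotonicity of item~2. The only cosmetic differences are that the paper argues the finite-part monotonicity by contradiction rather than by direct passage to the limit, and writes the endpoint comparison as $v(0,t)\le v(\beta^{-1}t,t)$ together with homogeneity of $\tilde v$ instead of your $v(t,t\beta)\ge v(0,t\beta)$.
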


\begin{proof}
We first show that $\tilde v$ is a monotone decreasing function in $[0, \infty)$.
Suppose that, by contradiction, monotonicity is violated, namely, there exists
$\beta_1$ and $\beta_2$ such that $\beta_1 < \beta_2$ and $\tilde v(\beta_1) < \tilde v(\beta_2)$.
Let $u = \tilde v(\beta_2) -\tilde v(\beta_1)>0$.
Recall that 
\[ 
\tilde v(\beta) = \lim_{t\rightarrow 0} v(t, t\beta).
\]
We show that for sufficiently small $t$
\[
v(t,t\beta_2) - v(t,t\beta_1) \leq u/2
\]
holds, which contradicts $\tilde v(\beta_2) -\tilde v(\beta_1)=\lim_{t\downarrow0} (v(t,t\beta_2)-v(t,t\beta_1))=u$.
In fact, since $v_{P}(\varepsilon,\eta)=v(\varepsilon,\eta)-\eta(C\bullet \Ip + \varepsilon \Id\bullet \Ip)$ is a monotone decreasing function in $\eta$ (see item~3 of Proposition~\ref{31}),
\[
v(t,t\beta)- t\beta(C\bullet \Ip+ t \Id\bullet \Ip)
\] 
is a monotone decreasing function in $\beta$.
Therefore,
\[
v(t,t\beta_2)-t\beta_2(C\bullet \Ip + t\Id\bullet \Ip)
\leq
v(t,t\beta_1)-t\beta_1(C\bullet \Ip + t\Id\bullet \Ip)
\]
holds.  This implies that, for sufficiently small $t>0$, 
\[
v(t,t\beta_2)-v(t,t\beta_1) \leq t(\beta_2-\beta_1)(C\bullet \Ip + t \Id\bullet \Ip) \leq \frac{u}2
\]
and hence letting $t\rightarrow 0$, we obtain
\[
0 < u = \tilde v (\beta_2)-\tilde v(\beta_1)\leq  \frac{u}2,
\]
contradiction.  

Now we confirm monotonicity at $\beta=\infty$.  Since $\tilde v(\infty) = \lim_{t\downarrow0}v(0,t)$, what we need to show is
$\tilde v(\infty) \leq \tilde v(\beta)$ for any finite $\beta$.  This is confirmed as follows:
\[
\tilde v(\infty) =\lim_{t\downarrow0}v(0,t) \leq \lim_{t\downarrow 0}v(\beta^{-1}t,t) = \lim_{t\downarrow 0}v(t,\beta t) = \tilde v(\beta)\ (\beta >0).
\]
The first inequality is due to the item~2. of Proposition~\ref{31}, and the second equality holds because $\tilde v(\gamma)=\tilde v(k\gamma)$
for any $k>0$, i.e., $\tilde v$ is a homogeneous function.

Now we prove convexity of $\tilde v(\beta)$.  We define the function $v_k$ as
\[
v_k(\beta) \equiv v_P\left(\frac1{k},\frac1{k}\beta \right).
\]
Then it follows for any $\beta \in [0,\infty)$ that
\[
\lim_{k\rightarrow\infty} v_k(\beta ) = \lim_{k\rightarrow\infty}v_P\left(\frac1{k},\frac1{k}\beta\right)=\tilde v(\beta).
\]
Thus, $\{v_k\}$ converges pointwise to $\tilde v$.  By item~3. of Proposition~\ref{31}, $v_k$ is convex on $(0,\infty)$, so it follows from 
 \cite[Theorem~10.8]{rockafellar} that  $\tilde v$ is also a convex function on $(0,\infty)$.
Since $\tilde v(\alpha)$ is monotone increasing on $[0,\infty)$, $\tilde v$ is convex on $[0,\infty)$.  
This completes the proof of the theorem.
\qed\end{proof}

\medskip\noindent
{\bf (Proof of Theorem \ref{2.2})}
We recall that a convex function is continuous over the relative interior of its domain, e.g., \cite[Theorem~10.1]{rockafellar}, so 
the function $\tilde v$ in Theorem~\ref{4.2} is continuous over $(0,\infty)$.
We also recall that $\av(\theta)= \lim_{t\downarrow 0} v(t\cos\theta,t\sin\theta)$.  We have, for $\theta \in [0,\pi/2]$,
\[
\av(\theta) = 
\lim_{t\downarrow 0} v(t\cos\theta,t\sin\theta) = \lim_{t\downarrow 0} v(t ,t\tan\theta) = \tilde v(\tan\theta).
\]
Since $\av(\theta) =\tilde v(\tan \theta)$ and $\tan$ is a strictly monotone increasing function in $\theta$, 
Theorem \ref{2.2} readily follows.

\section{Application to Infeasible Interior-point Algorithms}\label{sec:app}

The analysis in the previous section indicates that the limiting common optimal value of  ${\bf  P}(t\alpha, t\beta)$
and ${\bf  D}(t\alpha, t\beta)$ exists as $t\rightarrow 0$ and the value is between $v({\bf D})$ and $v({\bf P})$.
In this section, we discuss an application to the convergence analysis of  infeasible primal-dual interior-point algorithms.

While the efficiency of  infeasible interior-point algorithms  
is supported by a powerful polynomial-convergence analysis when applied to a primal-dual strongly feasible problems, 
its behavior for singular problems was not clear.
Our analysis leads to a clearer picture about what happens when infeasible interior-point algorithms are applied to 
arbitrary SDP problems.  As indicated in Subsection~\ref{IPM}, we focus on two polynomial-time algorithms by Zhang \cite{Zhang:1998:ESP} and
 Potra and Sheng \cite{ipm:Potra16}, but the idea and the analysis can be applied to many other variants.

Suppose that $\hat X$ is a solution to $A(X) = b$, 
$(\hat S, \hat y)$ is a solution to $S = C - \sum_i A_i y_i$,   
and let 
 \[(X^0, S^0, y^0) \equiv (\hat X + \rho\sin\theta \Ip, \hat S + \rho \cos\theta \Id,0 ),\] where $\theta\in (0, \pi/2)$ and 
$\rho > 0$ is sufficiently large so that $X^0\succ 0$ and $S^0\succ 0$ hold. This is an interior feasible point to
the primal-dual pair ${\bf P}(\rho \cos\theta, \rho \sin\theta)$ and ${\bf D}(\rho \cos\theta, \rho \sin\theta)$,
see (\ref{Pptbd}) and (\ref{Dptbd}).
In the following, we analyze infeasible primal-dual interior-point algorithms started from this point.

For simplicity of notation, we let $\alpha \equiv \cos\theta$ and $\beta \equiv \sin\theta$.
As discussed in Section \ref{3.2.1}, in particular as  stated in Proposition~\ref{StpIPM},
the infeasible primal-dual interior-point algorithms we are considering generate a sequence $(X^k, S^k, y^k)$  of interior feasible points
to the perturbed system
\begin{equation}\label{eq:perturb}
C + t^k \alpha \Id - \sum_i A_i y_i^k= S^k,\ \ \ A (X^k - t^k \beta \Ip)=b,\ \ X^k\succeq 0,\ \ S^k \succeq 0.
\end{equation}
for $t^k \geq 0$.
We define
\begin{equation}\label{modobjs}
(C + t^k \alpha \Id)\bullet X\ \ \ \hbox{and}\ \ \ \sum_i (b_i+ t^k\beta A_i\bullet \Ip) y_i^k
\end{equation}
as the \emph{modified primal objective function} and the  \emph{modified dual objective function}, respectively. 
If  $(X^k, S^k, y^k,t^k)$  is a sequence satisfying \eqref{eq:perturb} for every $k$ and $t^k\downarrow 0$, then it is an \emph{asymptotically pd-feasible sequence} in the sense that $X^k$, $S^k$ satisfy the conic constraints of ${\bf P}$ and ${\bf D}$ and the distance between $(X^k,S^k,y^k)$ and the set of solutions to the \emph{linear constraints} of ${\bf P}$ and ${\bf D}$ goes to $0$ as $k \to \infty$.\footnote{
We note, however, that this does \textbf{not} imply that, say, the distance between $X^k$ and the feasible region of ${\bf P}$ goes to $0$ as $k \to \infty$, even if the feasible region of ${\bf P}$ is not empty. A similar comment applies to $S^k, y^k$ and the feasible region of ${\bf D}$. An instructive example can be seen in \cite[Example~1]{sturm_error_2000}.
}


Now we are ready to describe and prove our first result on infeasible interior-point algorithms. 

\begin{theorem}\label{5.1} \   Suppose that $\hat X$ is a solution to $A(\hat X)=b$,
$(\hat S, \hat y)$ is a solution to $C - \sum_i A_i y_i = S$, and let 
 $(X^0, S^0, y^0) \equiv (\hat X + \rho\sin\theta \Ip,  \hat S + \rho\cos\theta \Id,0 )$, where $\theta\in (0, \pi/2)$ and 
$\rho > 0$ is sufficiently large so that $X^0\succ 0$ and $S^0\succ 0$ hold.  Also, let $t^0\equiv 1$.  Apply the
algorithm Algorithm-B of \cite{Zhang:1998:ESP} or Algorithm 2.1 of \cite{ipm:Potra16} to solve {\bf P} and {\bf D}, and let 
$\{(X^k, S^k, y^k, t^k)\}$ be the generated sequence.  
Then the following statements hold.
\begin{enumerate}

\item $t^k\rightarrow 0$ and  $X^k\bullet S^k \rightarrow 0$ hold if and only if {\bf P} and {\bf D} are asymptotically pd-feasible, namely, the algorithms generate an asymptotically pd-feasible sequence with duality gap converging to zero if and
only if {\bf P} and {\bf D} are asymptotically pd-feasible. See the remark after the proof of the theorem for the behavior 
of the algorithms when {\bf P} and {\bf D} are not asymptotically pd-feasible.

\item  If the problem is asymptotically pd-feasible, then the generated sequence of the modified primal and dual objective 
values (\ref{modobjs}) converges to the value $\av(\theta) \in [v({\bf D}), v({\bf P})]$.
Here, we include the possibility that $\av(\theta) =+\infty$ and $\av(\theta) =-\infty$, interpreting 
them as divergence to $+ \infty$ and $-\infty$,  respectively.

\item In item 2., as $\theta$ gets closer to $0$ the
 limiting modified objective values of the infeasible primal-dual algorithm get closer to the primal optimal value $v({\bf P})$
of the original problem. 
As $\theta$ gets closer to $\pi/2$
the limiting modified objective value gets closer to the dual optimal value 
$v({\bf D})$.

\end{enumerate}
\end{theorem}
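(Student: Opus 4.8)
The plan is to read the generated iterates through Proposition~\ref{StpIPM}. For the prescribed starting point the iterates satisfy \eqref{eq:perturb}, so with $\alpha=\cos\theta$ and $\beta=\sin\theta$ each $(X^k,S^k,y^k)$ is simultaneously feasible for ${\bf P}(t^k\alpha,t^k\beta)$ and ${\bf D}(t^k\alpha,t^k\beta)$. This lets me reduce the whole statement to the behavior of the single scalar sequence $\{t^k\}$: the two functions in \eqref{modobjs} are precisely the primal and dual objective values of this perturbed pair, and, by the same manipulation as in \eqref{223a}, their difference is the gap,
\[
(C+t^k\alpha\Id)\bullet X^k-\sum_i(b_i+t^k\beta A_i\bullet\Ip)y_i^k = X^k\bullet S^k.
\]
Since $t^k$ is nonincreasing with $t^{k+1}=(1-s^k)t^k$ (see \eqref{recur_t}), it has a limit $\bar t\ge0$, and everything hinges on showing that $\bar t=0$ exactly when the problem is asymptotically pd-feasible.

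For item~1 the direction ``$t^k\to0$ and $X^k\bullet S^k\to0$ $\Rightarrow$ asymptotic pd-feasibility'' is immediate from \eqref{eq:perturb}: the matrix $X^k-t^k\beta\Ip$ lies in $\mathcal{V}$ while $X^k\succeq0$, and the shift $t^k\beta\Ip\to0$, so $0\in\mathrm{cl}(\PSDcone{n}-\mathcal{V})$ and ${\bf P}$ is asymptotically feasible; the symmetric argument on the dual constraint gives the same for ${\bf D}$. For the contrapositive of the converse, suppose ${\bf P}$ is strongly infeasible, so there is $y$ with $\sum_iA_iy_i\succeq0$ and $b^Ty<0$; pairing this certificate with $A(X^k)=b+t^k\beta A(\Ip)$ and using $X^k\succeq0$ forces $0\le b^Ty+t^k\beta\langle A(\Ip),y\rangle$, which bounds $t^k$ away from $0$. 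Thus the algorithm gets stuck and cannot drive $t^k$ or the gap to zero, which is the behavior announced in the remark.

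The substantial part of item~1 is the remaining implication: asymptotic pd-feasibility $\Rightarrow$ $t^k\to0$ and $X^k\bullet S^k\to0$. Since optimal solutions of ${\bf P}$ or ${\bf D}$ need not exist, the classical polynomial-complexity analyses of \cite{Zhang:1998:ESP,ipm:Potra16} cannot be quoted verbatim, and I would instead argue by contradiction, assuming $\bar t>0$. As $\theta\in(0,\pi/2)$ gives $\alpha,\beta>0$, for every $t>0$ the pair ${\bf P}(t\alpha,t\beta)$, ${\bf D}(t\alpha,t\beta)$ is primal-dual strongly feasible, as noted after \eqref{Dptbd}; over the compact range $t^k\in[\bar t,1]$ these Slater certificates may be taken uniformly, so dual Slater bounds the primal sublevel sets and primal Slater bounds the dual ones, uniformly in $t$. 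Because the iterates stay in the algorithm's central-path neighborhood, the normalized gap $\mu^k=X^k\bullet S^k/n$ satisfies $\mu^k\asymp t^k\in[\bar t,1]$, so the eigenvalues of $X^k$ and $S^k$ stay bounded above and below by positive constants independent of $k$. This uniform conditioning is exactly what the step-size estimates of \cite{Zhang:1998:ESP,ipm:Potra16} require, and it yields a uniform lower bound $s^k\ge\underline{s}>0$; then $t^{k+1}\le(1-\underline{s})t^k\to0$, contradicting $\bar t>0$. Hence $\bar t=0$ and $X^k\bullet S^k=n\mu^k\to0$ as well. Converting the uniform boundedness of the iterates and the uniform step-size bound into statements tightly matched to each algorithm's neighborhood is the delicate, and main, obstacle of the proof.

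Items~2 and~3 follow once $t^k\to0$ is established. By weak duality for the strongly feasible pair ${\bf P}(t^k\alpha,t^k\beta)$, ${\bf D}(t^k\alpha,t^k\beta)$, the common optimal value $v(t^k\alpha,t^k\beta)$ is sandwiched between the two modified objective values of \eqref{modobjs}, whose difference is the gap $X^k\bullet S^k\to0$. By Theorem~\ref{2.1} the limit $\lim_{t\downarrow0}v(t\alpha,t\beta)=\av(\theta)$ exists, so along $t^k\downarrow0$ both modified objectives converge to $\av(\theta)$, the cases $\av(\theta)=\pm\infty$ being read off the same sandwich since a vanishing gap cannot separate the two objectives in the limit. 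Finally, Theorem~\ref{2.2} places $\av(\theta)$ in $[v({\bf D}),v({\bf P})]$ and, being monotone decreasing in $\theta$ with $\av(0)=v({\bf P})$ and $\av(\pi/2)=v({\bf D})$, yields item~3: the limiting modified objective value slides from $v({\bf P})$ toward $v({\bf D})$ as $\theta$ increases from $0$ to $\pi/2$.
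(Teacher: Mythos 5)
Your proposal follows essentially the same route as the paper: reduce everything to feasibility of the iterates for ${\bf P}(t^k\alpha,t^k\beta)$ and ${\bf D}(t^k\alpha,t^k\beta)$ via Proposition~\ref{StpIPM}, sandwich $v(t^k\alpha,t^k\beta)$ between the two modified objectives whose difference is $X^k\bullet S^k$, invoke Theorems~\ref{2.1} and~\ref{2.2} for items~2 and~3, and prove the hard implication of item~1 by contradiction through a uniform lower bound on the step sizes over a compact region when $t^k$ is bounded away from zero (the paper's Appendix~B argument, which it likewise only outlines). The only cosmetic differences are that you derive $X^k\bullet S^k\to 0$ from a neighborhood relation $\mu^k\asymp t^k$ rather than from the product bound $\prod_l(1-\eta s^l)$, and that you make explicit the strong-infeasibility certificate argument that the paper relegates to the remark following the theorem.
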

\begin{proof}
First, we discuss item~1. If $\{(X^k, S^k, y^k, t^k)\}$ is an asymptotically pd-feasible sequence, then ${\bf P}$ and ${\bf D}$  must  be asymptotically pd-feasible. Next, we take a look at the converse.
In the analysis conducted in \cite{Zhang:1998:ESP,ipm:Potra16}, although both papers assume the existence of a 
solution to \eqref{pd-formulation}, in fact, the existence of a solution is not necessary for showing 
convergence of $t^k$ and $X^k \bullet S^k$ to zero under asymptotic pd-feasibility.
Under asymptotic pd-feasibility, for any $t> 0$ the perturbed problems are strongly feasible.
This is enough for showing $t^k\rightarrow 0$ and $X^k\bullet S^k\rightarrow 0$ in these algorithms.
We give more details of the proof in Appendix~\ref{app2}.

Now we prove items 2 and 3.
The following relations hold at the $k$-th iteration: 
\begin{eqnarray} 
&&(C+t^k \alpha \Id)\bullet X^k - \sum _i (b_i + t^k\beta A_i \bullet \Ip) y_i^k  = X^k\bullet S^k. \label{1215_1} \\
&&v(t^k\alpha, t^k\beta) \in \left[\sum _i (b_i + t^k\beta A_i \bullet \Ip) y_i^k   , (C+t^k \alpha \Id)\bullet X^k \right] \label{1215_2}
\end{eqnarray}
(See also (\ref{223a}) and (\ref{223b}) for the derivation of these relations.)

Then it follows from (\ref{1215_1}), (\ref{1215_2}) and $X^k\bullet S^k \rightarrow 0$
that the sets of accumulation points of $\{(C+t^k \alpha \Id)\bullet X^k\}$, 
$\{v(t^k\alpha, t^k\beta)\}$, and $\{ \sum (b_i +t^k \beta A_i \bullet \Ip) y_i^k\}$
coincide.
Since $t^k\rightarrow 0$, this 
implies that $ v(t^k\alpha, t^k\beta)=v(t^k \cos\theta, t^k \sin\theta) $ converges to $\av(\theta)$. 
Then the sequences of 
the modified objective functions (\ref{modobjs})  also converge to $\av(\theta)$.
\qed\end{proof}

\noindent
{\bf Remark}  When {\bf P} and {\bf D} are not pd-asymptotically feasible, $\lim_{k\rightarrow \infty} t^k$ is positive  for both algorithms \cite{Zhang:1998:ESP,ipm:Potra16}.  
But the behavior of the duality gap $X^k\bullet S^k$ is a bit different.  
In the case of Zhang's algorithm, the sequence of $X^k\bullet S^k$ also converges to a positive value 
as well,  but in the case of Potra and Sheng's algorithm, what we can say is that ${\rm liminf}\ X^k\bullet S^k$ is positive.  
This is because the sequence $X^k\bullet S^k$ is not necessarily monotonically decreasing in Potra and Sheng's algorithm.

\medskip
Now we present the last theorem.
A typical choice of the initial iterate $(X^0, S^0, y^0)$ for  primal-dual infeasible interior-point algorithms is  
$(X^0, S^0, y^0)=(\rho_0 I, \rho_1 I, 0)$ with $\rho_0>0$ and $\rho_1>0$ sufficiently large.  This is different from the one adopted in Theorem \ref{5.1}.  
In concluding this section, we discuss how our results can be adapted to this case.
Let $\hat X$  be a solution to $A(X)=b$.
If we set $\Ip \equiv \rho_0 I - \hat X$ and $\Id \equiv \rho_1 I - C$ with $\rho_0$ and $\rho_1$ sufficiently large so that
$\Ip\succ 0$ and $\Id\succ 0$ hold, 
$(X^0, S^0, y^0)$ is a feasible solution to ${\bf P}(1, 1)$ and ${\bf D}(1, 1)$.
Now, we are ready to apply an argument analogous to the one we developed earlier to derive Theorem \ref{5.1} with this choice of 
$\Ip$ and $\Id$ to obtain the following theorem.  
%
%

\medskip\noindent
\begin{theorem}\label{5.2}\ Let 
$(X^0, S^0, y^0) \equiv (\rho_0 I, \rho_1 I,0 )$, where 
$\rho_0 > 0$ and $\rho_1 > 0$ are sufficiently large so that $\Ip=\rho_0 I- \hat X \succ 0$ and $\Id =\rho_1 I- C \succ 0$ hold, where 
$\hat X$ is a solution to $A(X) = b$.  Apply the algorithm Algorithm-B of \cite{Zhang:1998:ESP} or Algorithm 2.1 of \cite{ipm:Potra16} with the initial iterate
$(X^0, S^0, y^0)$ and $t^0=1$, and let $\{(X^k, S^k, y^k, t^k)\}$ be the generated sequence.  Then the following statements hold:

\begin{enumerate}
\item $t^k\rightarrow 0$ and  $X^k S^k \rightarrow 0$ hold if and only if {\bf P} and {\bf D} are asymptotically pd-feasible,
namely, the algorithm generates an asymptotically pd-feasible sequence with duality gap converging to zero if and
only if {\bf P} and {\bf D} are asymptotically pd-feasible.  
If {\bf P} and {\bf D} are not asymptotically pd-feasible, then 
the same remark after Theorem \ref{5.1} holds.

\item  If the problem is asymptotically-pd feasible, then the generated sequence of the modified primal and dual objective 
values (\ref{modobjs}) converges to a value $\av(\pi/4) \in [v({\bf D}), v({\bf P})]$.
Here, we include the possibility that $\av(\pi/4) =+\infty$ and $\av(\pi/4) =-\infty$, interpreting 
them as divergence to $+ \infty$ and $-\infty$,  respectively.
\end{enumerate}
\end{theorem}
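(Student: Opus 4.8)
The plan is to reduce Theorem~\ref{5.2} to Theorem~\ref{5.1} by recognizing that the ``typical'' initialization $(\rho_0 I, \rho_1 I, 0)$ is nothing but the $\theta = \pi/4$ instance of the framework already analyzed. First I would invoke Proposition~\ref{StpIPM} to read off the implicit perturbation directions. Since $y^0 = 0$, the quantities appearing there become $I'_d = S^0 - (C - \sum_i A_i y_i^0) = \rho_1 I - C$ and $I'_p = X^0 - \hat X = \rho_0 I - \hat X$, which are exactly the matrices $\Id$ and $\Ip$ declared in the statement. Thus the generated sequence $(X^k, S^k, y^k, t^k)$ satisfies $C + t^k \Id - \sum_i A_i y_i^k = S^k$ and $A(X^k) = b + t^k A(\Ip)$; in other words $(X^k, S^k, y^k)$ meets the linear constraints of ${\bf P}(t^k, t^k)$ and ${\bf D}(t^k, t^k)$, so the perturbation parameters satisfy $\varepsilon = \eta = t^k$. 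This is precisely the direction $\theta = \pi/4$ under the parametrization $(\varepsilon, \eta) = t(\cos\theta, \sin\theta)$, and one checks directly that $(\rho_0 I, \rho_1 I, 0)$ is interior feasible to ${\bf P}(1,1)$ and ${\bf D}(1,1)$, since $A(\rho_0 I) = A(\hat X) + A(\Ip) = b + A(\Ip)$ and $C + \Id = \rho_1 I$.

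Having made this identification, I would rerun the proof of Theorem~\ref{5.1} with $\alpha = \beta = 1$. For item~1, the equivalence between asymptotic pd-feasibility and the convergence $t^k \to 0$, $X^k \bullet S^k \to 0$ rests only on the strong feasibility of ${\bf P}(t,t)$ and ${\bf D}(t,t)$ for every $t > 0$ (which holds under asymptotic pd-feasibility, as noted after \eqref{Dptbd}) together with the convergence analysis recalled in Appendix~\ref{app2}; neither ingredient uses the specific shape of $\Ip, \Id$, so the argument carries over unchanged. For item~2 I would reuse the identity
\[
(C + t^k \Id)\bullet X^k - \sum_i (b_i + t^k A_i \bullet \Ip) y_i^k = X^k \bullet S^k
\]
and the weak-duality sandwich
\[
v(t^k, t^k) \in \left[\sum_i (b_i + t^k A_i \bullet \Ip) y_i^k,\ (C + t^k \Id)\bullet X^k\right],
\]
exactly as in \eqref{1215_1}--\eqref{1215_2}. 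Because $X^k \bullet S^k \to 0$ forces the modified primal objective, $v(t^k, t^k)$, and the modified dual objective to share the same accumulation points, and because $t^k \to 0$ together with Theorem~\ref{2.2} gives $v(t^k, t^k) \to \av(\pi/4) \in [v({\bf D}), v({\bf P})]$, the modified objective values converge to $\av(\pi/4)$.

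I do not anticipate a genuine obstacle, as the substance is inherited wholesale from Theorem~\ref{5.1}; the only real work is the bookkeeping that pins down $\Ip$ and $\Id$ through Proposition~\ref{StpIPM}. The one point deserving care is that the $\av(\cdot)$ in the conclusion is computed with respect to these particular $\Ip, \Id$, so the relevant angle is $\theta = \pi/4$ irrespective of the magnitudes $\rho_0, \rho_1$; these affect only whether $\Ip \succ 0$ and $\Id \succ 0$, not the limiting direction along which $(\varepsilon, \eta)$ approaches the origin.
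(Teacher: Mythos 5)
Your proposal is correct and follows essentially the same route as the paper: the paper likewise derives Theorem~\ref{5.2} by observing that with $\Ip=\rho_0 I-\hat X$ and $\Id=\rho_1 I-C$ the initial point is feasible for ${\bf P}(1,1)$ and ${\bf D}(1,1)$, so the Theorem~\ref{5.1} argument applies with $\varepsilon=\eta=t^k$, i.e.\ the direction $\theta=\pi/4$. Your extra bookkeeping via Proposition~\ref{StpIPM} and the homogeneity of $\tilde v$ (to pass from $v(t,t)$ to $\av(\pi/4)$) is exactly the intended justification.
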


%
%

\medskip


\section{Examples}\label{sec:ex}

In this section, we present three examples with nonzero duality gaps to illustrate Theorems \ref{2.1} and \ref{2.2}.  
The optimal values of {\bf P} and {\bf D} are both finite in Example 1, the optimal value of {\bf P} is finite but {\bf D} is
weakly infeasible in Example 2, and both problems are weakly infeasible in Example 3.  In the latter two cases the duality
gaps are infinity.

\medskip\noindent
{\bf Example 1}

We start with a simple instance with a finite nonzero duality gap taken from Ramana's famous paper \cite{Ramana95anexact}.
The following problem has a duality gap of one.

The problem {\bf D} is
\[
\max\ y_1\ \hbox{s.t.}\ \ \left(\begin{array}{ccc} 1 -y_1 & 0 & 0 \\
0 & -y_2 & -y_1 \\ 
0 & -y_1 & 0
\end{array}\right) \succeq 0.
\]
With that, we have
\[
C=
\left(\begin{array}{ccc} 1 & 0 & 0 \\
0 & 0 & 0 \\ 
0 & 0 & 0
\end{array}\right),\ \ \ 
A_1=
\left(\begin{array}{ccc} 1 & 0 & 0 \\
0 & 0 & 1 \\ 
0 & 1 & 0
\end{array}\right),\ \ \ 
A_2=
\left(\begin{array}{ccc} 0 & 0 & 0 \\
0 & 1 & 0 \\ 
0 & 0 & 0
\end{array}\right),\ \ \ b_1=1.
\]
The optimal value $v({\bf D})= 0$ for this problem, since $y_1=0$ is the only possible value for the lower-right $2\times 2$ submatrix to be 
positive semidefinite.

The associated primal {\bf P} is
\[
\min\ x_{11}\ \ \hbox{s.t.}\ x_{11}+ 2 x_{23} = 1, \ x_{22}=0,\ \left(\begin{array}{ccc} x_{11} & x_{12} & x_{13}\\ x_{12} & x_{22} & x_{23}\\
x_{13} & x_{23} & x_{33}\end{array}\right)\succeq 0.
\]
The optimal value $v({\bf P})=1$ for this problem, since $x_{23}=0$ must hold for positive semidefiniteness of the lower-right $2\times 2$
submatrix, which drives $x_{11}$ to be 1.

Now we consider the problem ${\bf D}(\varepsilon,\eta)$
\[
\max\ (1+\eta) y_1 + \eta y_2\ \ \hbox{s.t.}\ \left(\begin{array}{ccc} 1+\varepsilon -y_1 & 0 & 0 \\
0 & \varepsilon -y_2 & -y_1 \\ 
0 & -y_1 & \varepsilon
\end{array}\right) \succeq 0.
\]
This is equivalent to 
\[
\max\ (1+\eta) y_1 + \eta y_2\ \ \hbox{s.t.}\ 1+\varepsilon-y_1 \geq 0,\ \ \ \varepsilon(\varepsilon - y_2) - y_1^2 \geq 0.
\]

Since the objective is linear, there is an optimal solution 
such that at least one of the  inequality constraints is active.  
Taking into account that the second constraint is quadratic, we analyze the following three subproblems, and take
the maximum of them.
\begin{eqnarray*}
&&
{\rm (Case\ 1)}\ 
\max\ (1+\eta) y_1 + \eta y_2\ \ \hbox{s.t.}\ 1+\varepsilon-y_1 = 0,\ \ \ \varepsilon(\varepsilon - y_2) - y_1^2 \geq 0.\\
&&
{\rm (Case\ 2)}\ 
\max\ (1+\eta) y_1 + \eta y_2\ \ \hbox{s.t.}\ 1+\varepsilon-y_1 \geq 0,\ \ \ y_1=\sqrt{\varepsilon(\varepsilon - y_2)}.\\
&&
{\rm (Case\ 3)}\ 
\max\ (1+\eta) y_1 + \eta y_2\ \ \hbox{s.t.}\ 1+\varepsilon-y_1 \geq 0,\ \ \ y_1=-\sqrt{\varepsilon(\varepsilon - y_2) }.
\end{eqnarray*}

\noindent
(Case 1)

In this case, the second constraint yields
\[
\varepsilon - \frac{(1+\varepsilon)^2}{\varepsilon} \geq y_2.
\]
Together with $y_1=1+\varepsilon$, the problem reduces to a  linear program, and it follows that the maximum is
\[
v_1(\varepsilon,\eta)\equiv(1+\eta)(1+\varepsilon)+\eta\varepsilon - \frac{\eta(1+\varepsilon)^2}{\varepsilon}.
\]

\noindent
(Case 2)

Under this condition, the objective function is written as
\[
f(y_2)\equiv (1+\eta)\sqrt{\varepsilon(\varepsilon-y_2)}+\eta y_2.
\]
By computing the derivative, we see that the function takes the unique maximum at
\begin{equation}\label{maxi}
y_2 = \varepsilon -\frac{\varepsilon(1+\eta)^2}{4\eta^2}
\end{equation}
and
\begin{equation}\label{1}
\sqrt{\varepsilon(\varepsilon-y_2)} = \frac{\varepsilon(1+\eta)}{2\eta}.
\end{equation}
Then, we see that
\begin{equation} \label{2}
f(y_2) = \varepsilon\eta + \frac{\varepsilon}{4\eta}(1+\eta)^2.
\end{equation}
But we should recall that this maximum is obtained by ignoring the constraint
\[
1+ \varepsilon - y_1 = 1+\varepsilon - \sqrt{\varepsilon(\varepsilon - y_2)}\geq0.
\]
By substituting (\ref{maxi}) and  (\ref{1}) into this constraint, (\ref{2}) is the maximum only if
\begin{equation}\label{3}
1+\varepsilon - \frac{\varepsilon(1+\eta)}{2\eta} \geq 0, \hbox{or,\ equivalently}, \ \frac{2\eta}{1-\eta} \geq \varepsilon
\end{equation}
is satisfied.

If (\ref{3}) does not hold, then, the maximum of $f(y_2)$ is taken at the boundary of the constraint
$1+\varepsilon -y_1 \geq 0$, i.e., $y_2$ satisfying the condition
\[
1+\varepsilon = \sqrt{\varepsilon(\varepsilon-y_2)}.
\]
Solving this equation with respect to $y_2$, we obtain
\[
y_2=-2-\frac{1}{\varepsilon},\ y_1 = 1+\varepsilon,\ \ f(y_2) = (1+\eta)(1+\varepsilon)-\eta\left(2+\frac1{\varepsilon}\right).
\]
In summary, the maximum value in (Case 2) is as follows:
\begin{eqnarray}
&&v_{2}(\varepsilon,\eta)\equiv\varepsilon\eta + \frac{\varepsilon}{4\eta}(1+\eta)^2\ \ \hbox{if}\ \frac{2\eta}{1-\eta} \geq \varepsilon, \\
&&v_2(\varepsilon,\eta)\equiv
(1+\eta)(1+\varepsilon)-\eta\left(2+\frac1{\varepsilon}\right)\ \ \hbox{if}\ \frac{2\eta}{1-\eta} \leq \varepsilon
\end{eqnarray}

\medskip\noindent
(Case 3)

In this case, $1+\varepsilon - y_1 \geq 0$ holds trivially.
Therefore, the maximization problem in this case is 
\[
\max -(1+\eta)\sqrt{\varepsilon(\varepsilon-y_2)}+\eta y_2.
\]
under the condition that $y_2 \leq \varepsilon$.  The function is 
monotone increasing, so that the maximum is attained when $y_2 = \varepsilon$ and
the maximum value is
\[ 
v_3(\varepsilon,\eta)\equiv\eta\varepsilon.
\]
\medskip 
Now we are ready to combine the three results to complete the evaluation of $\tilde v$ and $\av$.  
By letting $\varepsilon = t\alpha$, $\eta=t\beta$ with $t> 0$ and
letting $t\downarrow 0$, we see that

(Case 1) $\lim_{t\downarrow 0}v_1(t\alpha,t\beta)=0$.

(Case 2) $\lim_{t\downarrow 0}v_2(t\alpha,t\beta)=\frac{\alpha}{4\beta}$ if $\frac{\beta}{\alpha}\geq \frac12$,\ \ 
$\lim_{t\downarrow 0}v_2(t\alpha,t\beta)=1-\frac{\beta}{\alpha}$ if $\frac{\beta}{\alpha}\leq \frac12$

(Case 3) $\lim_{t\downarrow 0}v_3(t\alpha,t\beta)= 0$.

The maximum among the three corresponds to $\tilde v$.  Comparing the three, we see that (Case 2) always is the 
maximum.  This means
\[
\tilde v(\beta)=1 - \beta\ (\beta \in [0,\frac12]),\ \ \ \tilde v(\beta)=\frac1{4\beta}\ (\beta\in[\frac12, \infty)),\ 
\ \tilde v(\infty)=0.
\]

\medskip\noindent
{\bf Example 2}

The next example is such that {\bf D} is weakly infeasible but {\bf P} is weakly feasible and has a finite optimal value.

The problem {\bf D} is
\[
\max\ -y_1\ \ \hbox{s.t.}\ \left(\begin{array}{ccc} y_2 & 0 & 1 \\
0 & y_1 & 0 \\ 
1 & 0 & 0
\end{array}\right) \succeq 0.
\]
\[
C=
\left(\begin{array}{ccc} 0 & 0 & 1 \\
0 & 0 & 0 \\ 
1 & 0 & 0
\end{array}\right),\ \ \ 
A_1=
\left(\begin{array}{ccc} 0 & 0 & 0 \\
0 & -1 & 0 \\ 
0 & 0 & 0
\end{array}\right),\ \ \ 
A_2=
\left(\begin{array}{ccc} -1 & 0 & 0 \\
0 & 0 & 0 \\ 
0 & 0 & 0
\end{array}\right),
\ \ \ b_1=-1.
\]
This system is weakly infeasible, so $v({\bf D})=-\infty$.

The associated primal {\bf P} is
\[
\min\ 2x_{13}\ \ \hbox{s.t.}\ x_{11} = 0, \ x_{22}=1,\ \left(\begin{array}{ccc} x_{11} & x_{12} & x_{13}\\ x_{12} & x_{22} & x_{23}\\
x_{13} & x_{23} & x_{33}\end{array}\right)\succeq 0.
\]
The optimal value $v({\bf P})=0$ for this problem, since $x_{13}=0$ must hold for feasibility.

Now we consider the problem ${\bf D}(\varepsilon,\eta)$
\[
\max\ -(1+\eta) y_1 - \eta y_2\ \ \hbox{s.t.}\ \left(\begin{array}{ccc} y_2+\varepsilon & 0 & 1 \\
0 & y_1+\varepsilon & 0 \\ 
1 & 0 & \varepsilon
\end{array}\right) \succeq 0.
\]
It follows that 
\[
y_1 \geq -\varepsilon,\ \ \ y_2 \geq \frac{1-\varepsilon^2}{\varepsilon}.
\]
Therefore, we see that the maximum value is
\[
v(\varepsilon,\eta) = (1+\eta)\varepsilon - \frac{1-\varepsilon^2}{\varepsilon}\eta.
\]
Now we are ready to evaluate $\tilde v$ and $\av$.  
By letting $\varepsilon = t\alpha$, $\eta=t\beta$ with $t> 0$ and
letting $t\downarrow 0$, we see that
\[
\lim_{t\downarrow0} v(t\alpha,t\beta)=-\frac{\beta}{\alpha}.
\]
and
\[
\tilde v(\beta)=-\beta\ (\beta \in [0,\infty]).
\]

Finally, we deal with a pathological case where both primal and dual are weakly infeasible.

\medskip\noindent
{\bf Example 3}

The problem {\bf D} is
\[
\max\ y_1\ \ \hbox{s.t.}\ \left(\begin{array}{ccc} y_2 & 0 & 1+\frac12y_1 \\
0 & 1+y_1 & 0 \\ 
1+\frac12y_1 & 0 & 0
\end{array}\right) \succeq 0.
\]
\[
C=
\left(\begin{array}{ccc} 0 & 0 & 1 \\
0 & 1 & 0 \\ 
1 & 0 & 0
\end{array}\right),\ \ \ 
A_1=
\left(\begin{array}{ccc} 0 & 0 & -\frac12 \\
0 & -1 & 0 \\ 
-\frac12 & 0 & 0
\end{array}\right),\ \ \ 
A_2=
\left(\begin{array}{ccc} -1 & 0 & 0 \\
0 & 0 & 0 \\ 
0 & 0 & 0
\end{array}\right),\ \ \
b_1=1.
\]
The optimal value $v({\bf D})= -\infty$ for this problem, since $y_1=-2$ should hold for feasibility, but
then the (2,2) element becomes $-1$ and, therefore,  the matrix cannot be feasible.  By letting $y_2$ large and $y_1=0$, we confirm
the problem is weakly infeasible.

The associated primal {\bf P} is
\[
\min\ 2x_{13}+x_{22}\ \ \hbox{s.t.}\ x_{13}+ x_{22} = -1, \ x_{11}=0,\ \left(\begin{array}{ccc} x_{11} & x_{12} & x_{13}\\ x_{12} & x_{22} & x_{23}\\
x_{13} & x_{23} & x_{33}\end{array}\right)\succeq 0.
\]
This problem is weakly infeasible.

Now we consider the problem ${\bf D}(\varepsilon,\eta)$
\[
\max\ (1-\eta) y_1 - \eta y_2\ \ \hbox{s.t.}\ \left(\begin{array}{ccc} \varepsilon +y_1 & 0 & 1+\frac12 y_1 \\
0 & 1+\varepsilon +y_1 & 0 \\ 
1+\frac12 y_1 & 0 & \varepsilon
\end{array}\right) \succeq 0.
\]
This is equivalent to 
\[
\max\ (1-\eta) y_1 - \eta y_2\ \ \hbox{s.t.}\ \varepsilon+y_2 \geq 0,  \ \ \varepsilon(\varepsilon + y_2) - (1+\frac12y_1)^2 \geq 0,  \ \ \ 1+\varepsilon+y_1 \geq 0.
\]

Since the objective is linear, there is an optimal solution 
such that at least one of the  inequality constraints is active.  
Taking into account that the second constraint is quadratic, we analyze the following three subproblems and take
the maximum of them.

\begin{eqnarray*}
{\rm (Case\ 1)}&& 
\max\ (1-\eta) y_1 - \eta y_2\ \ \hbox{s.t.}\ \varepsilon+y_2 = 0,  \ \varepsilon(\varepsilon + y_2) - (1+\frac12y_1)^2 \geq 0,  \\ 
&& 1+\varepsilon+y_1 \geq 0.
\\
{\rm (Case\ 2)}&&
\max\ (1-\eta) y_1 - \eta y_2\ \ \hbox{s.t.}\ \varepsilon+y_2 \geq 0,  \ \varepsilon(\varepsilon + y_2) - (1+\frac12y_1)^2 = 0,  \\
&& 1+\varepsilon+y_1 \geq 0.
\\
{\rm (Case\ 3)}&&
\max\ (1-\eta) y_1 - \eta y_2\ \ \hbox{s.t.}\ \varepsilon+y_2 \geq 0,  \ \varepsilon(\varepsilon + y_2) - (1+\frac12y_1)^2 \geq 0,  \\
&& 1+\varepsilon+y_1 = 0.
\end{eqnarray*}

\medskip
\noindent
(Case 1)

In this case, we have $y_2=-\varepsilon$, $y_1=-2$.  Then the third constraint becomes $\varepsilon -1 \geq 0$.
Since we are interested in the situation where $\varepsilon$ is approaching zero, we may exclude this case.

\noindent
(Case 2)

In this case, we have
\[
\varepsilon(\varepsilon+y_2)=(1+\frac12y_1)^2.
\]
This implies that
\[
y_1 = 2(-1\pm\sqrt{\varepsilon(\varepsilon+y_2)).}
\]
Since the condition $1+\varepsilon+y_1 \geq 0$ yields
\[
\pm\sqrt{\varepsilon(\varepsilon+y_2)} \geq 1 - \varepsilon,
\]
choosing `-' sign is not compatible with our analysis since we are interested in 
the case where $\varepsilon$ is close to zero.
Therefore, we pick `+' sign, and seek for the maximum of the objective function
\[
	2(1-\eta)(-1+\sqrt{\varepsilon(\varepsilon+y_2)}-\eta y_2.
\]
By differentiation, we see that the function attains its maximum at
\[
y_1=2\left(-1+\frac{\varepsilon(1-\eta)}{\eta}\right),\ \ \ y_2=\frac{\varepsilon}{\eta^2}(1-2\eta).
\]
We see that the first constraint is always satisfied at the maximum.
The third constraint $1+y_1 + \varepsilon \geq 0$ is satisfied if
\[
\frac{\varepsilon}{\eta} \geq \frac{1+\varepsilon}2. 
\]
If this condition is not satisfied, then $1+y_1+\varepsilon=0$ holds at the maximum, so, we can leave the analysis
to the third case. 
Substituting $y_1, y_2$ to the objective, we conclude that, if $\varepsilon/\eta \geq 1$, then, the maximum is
\[
v_2(\varepsilon,\eta) \equiv 2(1-\eta)\left(-1-\varepsilon+\frac{\varepsilon}{\eta}\right)-\frac{\varepsilon}{\eta}+2\varepsilon,
\]
and if the aforementioned condition is not satisfied, then, we can leave the analysis to the third case below.

\medskip\noindent
(Case 3)

We have $y_1=-1-\varepsilon$.  After simple manipulation, we see that other two inequalities are satisfied iff
\[
y_2\geq\frac1{\varepsilon}\left(\frac{1-\varepsilon}2\right)^2 - \varepsilon. 
\]
Therefore, the maximum is 
\[
v_3(\varepsilon,\eta)\equiv-(1-\eta)(1+\varepsilon)-\frac{\eta}{\varepsilon}\left(\frac{1-\varepsilon}2\right)^2 + \varepsilon\eta.
\]

\medskip 
Now we are ready to combine the three results to complete evaluation of $\tilde v$ and $\av$.  
By letting $\varepsilon = t\alpha$, $\eta=t\beta$ with $t> 0$ and
letting $t\downarrow 0$, we see that

(Case 1) Cannot occur.

(Case 2) $\lim_{t\downarrow 0}v_2(t\alpha,t\beta)=-2+\frac{\alpha}{\beta}$ if $\frac{\alpha}{\beta}\geq \frac12$.

(Case 3) $\lim_{t\downarrow 0}v_3(t\alpha,t\beta)= -1-\frac14\frac{\beta}{\alpha}$.

The maximum between the latter two corresponds to $\tilde v$.  Thus, we obtain that
\[
\tilde v(\beta)=-2+\frac1{\beta}\ (\beta \in [0,2]),\ \ \ \tilde v(\beta)=-1-\frac{\beta}4\ (\beta\in[2, \infty]),
\]
where we used the convention $1/0 =\infty$.

\section{Concluding Discussion}\label{sec:conc}
In this paper, we developed a perturbation analysis for singular primal-dual semidefinite programs.
We assumed that primal and dual problems are asymptotically feasible and added positive definite perturbations 
to recover strong feasibility.   A major innovation was that we considered perturbations of 
primal and dual problems simultaneously. 
It was shown that the primal-dual common optimal value of the perturbed problem has a directional limit when
the perturbation is reduced to zero along a line. 
Representing the direction of approach with an angle $\theta$ between 0 and $\pi/2$, where the former and latter corresponds to 
the dual-only perturbation and the primal-only perturbation, respectively, we demonstrated 
that the limiting objective value is a monotone decreasing function in $\theta$ which takes the primal optimal value $v({\bf P})$ 
at $\theta=0$ and the dual optimal value $v({\bf D})$ at $\theta = \pi/2$.
Based on this result, we could show that 
the modified objective values of the two infeasible primal-dual interior-point algorithms by Zhang and by Potra and Sheng
converge to a value between the optimal values of {\bf P} and {\bf D}.  The modified primal and dual objective functions are easily 
computed from the current iterate.  { The development of analogous results
for homogeneous self-dual interior-point algorithms
and the design of robust infeasible primal-dual interior-point algorithms reflecting the theory developed in this paper
are interesting further research topics to explore}.

\section*{Acknowledgements}
	We thank the referees and the associate editor for their  comments, which helped to greatly improve the paper.

\section*{Appendix A: Outline of a Proof of the Existence and Analyticity of the Path ${\cal C} =\{w_\nu(t)| \ 0 <t <\infty \}$}\label{app}

Let
\begin{eqnarray*}
&&\phi_1(X, S, y) = X^{1/2}S X^{1/2}-\nu I,\ \ \ \phi_2(X, S, y)= C  - \sum_i A_i y_i- S ,\\
&&\phi_3(X, S, y) = \left(\begin{array}{c}A_1\bullet X  -b_1\\ \vdots\\
A_m\bullet X  -b_m \end{array}\right).
\end{eqnarray*}
Then, $w_\nu(t)$ is a unique solution to 
\[
\Phi(X,S,y,t) \equiv 
\left(\begin{array}{c}\phi_1(X,S,y)\\ \phi_2(X,S,y) +t\alpha I \\ \phi_3(X,S,y)-t\beta I\end{array}\right) = 0.
\]
$\Phi$ is an analytic mapping from $\{(X, S, y, t) \in {\cal S}_{++}^n\times {\cal S}_{++}^n\times \Re^{(m+1)}\}$ to ${\cal S}_{++}^n\times {\cal S}^n\times \Re^m$,
where ${\cal S}_{++}$ is the set of symmetric positive definite matrices.
Therefore, in order to show the existence and analyticity of the path with the help of the analytic version of the implicit function theorem, it is enough to confirm that the rank of the Jacobian matrix of $\Phi$ is $n(n+1)+m$.  
To this end, we show that the Jacobian matrix of the mapping
\[
\left(\begin{array}{c}\phi_1(X,S,y)\\ \phi_2(X,S,y) \\ \phi_3(X,S,y)\end{array}\right)
\]
is nonsingular.  Indeed it is essentially shown in Theorem 2.4 of \cite{Monteiro:1999:PCN} that
the Jacobian matrix is nonsingular if $\phi_1=0$, i.e., $XS =\nu I$.
(See also the note following the theorem.)

\section*{Appendix B: Outline of a Proof of Item 1 of Theorems \ref{5.1} and \ref{5.2}}\label{app2}

First, we observe that if either 
{\textbf{P}} or {\textbf{D}} is strongly infeasible, it is not possible to find $\{t^k\}$, $X^k$, and $S^k$ satisfying $t^k \to 0$ and $X^k \bullet S^k \to 0$.
It remains to show the converse, that is, assuming that {\bf P} and {\bf D} are
asymptotically pd-feasible, we need to show that $\{t^k\}$ generated by the algorithm converges to $0$ and $X^k \bullet S^k \to 0$. 
We provide an explanation for Zhang's algorithm (Algorithm B in Section 6.2 of \cite{Zhang:1998:ESP}). A similar argument also holds for Potra and Sheng's algorithm.  

As was explained in Subsection \ref{3.2.1}, the algorithm generates a sequence $\{(X^k, S^k, y^k, t^k)\}$ where
$X^k$ and $(S^k,y^k)$ are feasible solutions to ${\bf P}(t^k\alpha, t^k\beta)$ and ${\bf D}(t^k\alpha, t^k\beta)$,
respectively, $X^k \succ 0$, $S^k \succ 0$ and $t^k$ is a monotonically decreasing sequence with $t^0 = 1$.   The matrices $I_p$ and $I_d$ used to define
 ${\bf P}(t\alpha, t\beta)$ and ${\bf D}(t\alpha, t\beta)$ are determined by the initial values.
 We define ${\cal P}(t)$ and ${\cal D}(t)$ as the feasible regions of ${\bf P}(t\alpha, t\beta)$ and ${\bf D}(t\alpha, t\beta)$,
 respectively.
If the problems are asymptotically pd-feasible,
then for any $t>0$, ${\bf P}(t\alpha, t\beta)$ and ${\bf D}(t\alpha, t\beta)$ are strongly feasible. For the sake of obtaining a  contradiction, 
suppose that
$t^k$ has a positive limit $t^*>0$.  The iterates $(X^k, S^k, y^k, t^k)$ are confined to $\Omega$, where
\[
\Omega\equiv \{(X,S,y,t)\ | \ X \in {\cal P}(t),\ (S,y)\in {\cal D}(t),\ t\in [t^*,1], X\bullet S \leq X^0\bullet S^0\}.
\]
By using the facts that ${\bf P}(t\alpha, t\beta)$ and ${\bf D}(t\alpha, t\beta)$ are strongly feasible
for any $t>0$ and that the difference of the objective functions of ${\cal P}(t)$ and ${\cal D}(t)$, which is
nothing but $X\bullet S$, is bounded in $\Omega$, we can show that $\Omega$ is compact.  Therefore, $\{(X^k,S^k,y^k,t^k)\}$ has an accumulation point
$(X^*, S^*, y^*, t^*)$.  
The point $(X^*, S^*, y^*)$ is in the neighborhood of the central path employed by the algorithm 
(see (4.2)-(4.6) of \cite{Zhang:1998:ESP}). 
In a sufficiently small neighborhood $\Omega' \subseteq \{(X,S,y)|(X,S,y,t) \in \Omega\}$ of $(X^*,S^*,y^*)$, the  
search direction is well-defined and 
is a continuous function of $(X,S,y)$. 
Therefore, the norm of the search direction is  bounded over $\Omega'$.
This enables us to show that the step $s^k$ in \eqref{catA} is bounded away from zero uniformly if $(X^k, S^k, y^k)\in \Omega'$.
Then there exists $\zeta > 0$ such that $s^k> \zeta$ for all $k$ sufficiently large.  This contradicts that 
$t^k\rightarrow t^*$, because, in view of \eqref{recur_t}, we have $t^* \leq t^{k+1} = (1- s^k)t^k < (1-\zeta)t^k$, but this cannot hold for $k$ sufficiently large thus leading to a contradiction.

Next, we show that $t^k\rightarrow 0$ yields $X^k\bullet S^k \rightarrow 0$.  The stepsize $s^k$ is controlled in such a way that
\begin{equation}\label{lastone}
\frac{X^{k+1}\bullet S^{k+1}}{X^k\bullet S^k} \leq 1 - \eta s^k
\end{equation}
holds in the algorithm, where $\eta\in (0,1)$ is a constant.  (To see this, we associate the stepsize $s^k$ to 
$\alpha_+$ in Algorithm-B of \cite{Zhang:1998:ESP}.  From the definition of $\alpha_+$ in the bottom line of p.368 and (2.6) of
\cite{Zhang:1998:ESP}, we see that \eqref{lastone} holds with $\eta=(1-\sigma)/2$.)
There are two possible cases.   The first case is $s^k=1$ for some $k=\hat k$, say.  
In that case, after the $\hat k$th iteration, the algorithm
becomes a feasible path following method, and $X^k\bullet S^k$ converges to zero following a standard argument, see Algorithm-A in \cite{Zhang:1998:ESP}.
In the second case, $s^k< 1$ for all $k$.  Since $\lim_{k\rightarrow\infty}t^k=\lim_{k\rightarrow\infty} \prod_{l=0}^k (1-s^l) =0$ yields
$\lim_{k\rightarrow\infty} \prod_{l=0}^k (1-\eta s^l) =0$, we obtain
\[
\lim_{k\rightarrow\infty} X^k\bullet S^k \leq \lim_{k\rightarrow\infty} \prod_{l=0}^k (1-\eta s^l) X^0\bullet S^0 =0. 
\]


%
%

\bibliographystyle{spmpsci}      
\bibliography{regularization_mp_archives2a}   

%
%

\end{document}